\documentclass[a4paper, reqno, 12pt]{amsart}

\usepackage[utf8]{inputenc}
\usepackage{amsthm,amsfonts,amssymb,amsmath,amsxtra,amsrefs}
\usepackage{mathtools}
\usepackage{latexsym}
\usepackage{stmaryrd}
\usepackage{graphicx}
\usepackage{tikz}
\usepackage{tikz-cd}
\usepackage{todonotes,cancel}
\usepackage[margin=1.25in]{geometry}
\usepackage{mathrsfs}
\usepackage{bm}

\usepackage[all]{xy}
\SelectTips{cm}{}
\usepackage{xr-hyper}
\usepackage[colorlinks=false,
   citecolor=Black,
   linkcolor=Red,
   urlcolor=Blue]{hyperref}
\usepackage{verbatim}

\RequirePackage{xspace}
\RequirePackage{etoolbox}
\RequirePackage{varwidth}
\RequirePackage{enumitem}
\RequirePackage{tensor}
\RequirePackage{mathtools}
\RequirePackage{longtable}
\RequirePackage{multirow}

\usepackage{rotating}

\tikzset{
    labl/.style={anchor=south, rotate=90, inner sep=.5mm}
}

\newtheorem{thm}{Theorem}

\newtheorem*{thmintro}{Theorem}
\newtheorem{prop}[thm]{Proposition}

\newtheorem{lem}[thm]{Lemma}

\newtheorem{cor}[thm]{Corollary}

\theoremstyle{definition}

\newtheorem{defi}[thm]{Definition}

\newtheorem{remark}[thm]{Remark}

\numberwithin{equation}{section}
\numberwithin{thm}{section}

\newcommand{\BC}{\ensuremath{\mathbb {C}}\xspace}

\newcommand{{\BG}}{\ensuremath{\mathbb {G}}\xspace}

\newcommand{{\BK}}{\ensuremath{\mathbb {K}}\xspace}

\newcommand{\BN}{\ensuremath{\mathbb {N}}\xspace}

\newcommand{\BP}{\ensuremath{\mathbb {P}}\xspace}
\newcommand{\BQ}{\ensuremath{\mathbb {Q}}\xspace}
\newcommand{\BR}{\ensuremath{\mathbb {R}}\xspace}

\newcommand{\BZ}{\ensuremath{\mathbb {Z}}\xspace}

\newcommand{\CB}{\ensuremath{\mathcal {B}}\xspace}

\newcommand{\CI}{\ensuremath{\mathcal {I}}\xspace}

\newcommand{\CR}{\ensuremath{\mathcal {R}}\xspace}

\newcommand{\CU}{\ensuremath{\mathcal {U}}\xspace}

\newcommand{\fg}{\ensuremath{\mathfrak {g}}\xspace}
\newcommand{\fU}{\ensuremath{\mathfrak {U}}\xspace}

\newcommand{\x}{\times}
\newcommand{\ox}{\otimes}
\newcommand{\bfa}{\textnormal{\textbf{a}}}
\newcommand{\bfi}{\textnormal{\textbf{i}}}

\newcommand{\bfv}{\textnormal{\textbf{v}}}
\newcommand{\bfw}{\textnormal{\textbf{w}}}
\newcommand{\bfB}{\textnormal{\textbf{B}}}
\newcommand{\bfzero}{\textnormal{\textbf{0}}}

\newcommand{\ve}{\varepsilon}
\newcommand{\bfU}{\mathbf{U}}

\begin{document}

\title[]{Partitions of canonical bases}

\author[Jeff York Ye]{Jeff York Ye}
\address{Department of Mathematics, National University of Singapore, Singapore.}
\email{e1124873@u.nus.edu}

\begin{abstract}
We show that various partitions of the canonical basis of quantum groups constructed by Lusztig and by Kashiwara coincide. Using this partition, we show that the subset corresponding to open Richardson varieties equals to the intersection of the subsets corresponding to the Schubert cells. We also show that in type $A$, the weights from open Richardson varieties are saturated in the corresponding Bruhat interval polytope.
\end{abstract}

\maketitle	
\tableofcontents

\section{Introduction}
\label{sec:intro}

\subsection{Canonical bases and total positivity}
\label{sec:intro:CBTP}

Let $G$ be a split connected reductive group over $\BR$ of simply-laced type. Let $\bfU$ be the quantum group associated to the same root data. In \cite{Lus90}, Lusztig constructed a canonical basis $\bfB$ of $\bfU^+$, which has various important properties. For each reduced word $\bfi$ of the longest element $w_0\in W$, there is a parametrization of $\bfB$ by $\BN^{\ell(w_0)}$. For each dominant weight $\lambda$, $\bfB$ restricts to a basis $\bfB(\lambda)$ of the integral highest weight representation $L(\lambda)$ by acting on a highest weight vector $\eta_\lambda$.

Let $\CB=G/B^+$ be the flag variety. It has a decomposition into open Richardson varieties $\CB=\sqcup \mathring{\CB}_{v,w}$ for $v,w\in W$, where each open Richardson variety $\mathring{\CB}_{v,w}$ is defined to be the intersection of the Schubert cell $B^+\dot{w}B^+/B^+$ and the opposite Schubert cell $B^-\dot{v}B^+/B^+$. 

In \cite{Lus94a}, Lusztig defined totally nonnegative monoid $G_{\geq0}$ which has triangular decomposition
\[
G_{\geq0}=U^+_{\geq0}T_{>0}U^-_{\geq0}.
\]
The totally positive part of flag variety $\CB_{\geq0}$ is defined to be the Hausdorff closure of $U^-_{\geq0}B^+/B^+$. The total positive flag variety also admits a compatible decomposition $\CB_{\geq0}=\sqcup \CB_{v,w,>0}$.

In \cite{Lus19}*{Section~10} and \cite{Lus21}*{Section~2.4}, Lusztig introduced two different partitions of $\bfB$ parametrized by the Weyl group $W$ of $G$:
\begin{itemize}
    \item The partition in \cite{Lus19} uses the coordinates of $\bfB$ in $\BN^{\ell(w_0)}$. Lusztig constructed a map $\BN^{\ell(w_0)}$ to the Weyl group $W$, where an element $\bfa$ in $\BN^{\ell(w_0)}$ is send to the Demazure product of simple reflections $s_{i_k}$, over the indices $k$ such that the coordinate $a_k$ in $\BN^{\ell(w_0)}$ is non-zero. The image in $W$ along with the Bruhat order determines the partition $\bfB_w^{\{1\}}$ of $\bfB$.
    \item The partition in \cite{Lus21} uses the canonical basis coefficients of the action of $\CB_{e,w,>0}$ on the highest weight vector $\eta_\lambda\in L(\lambda)$. For any $x\in \CB_{e,w,>0}$, the set $\bfB_w^{tp}(\lambda)$ of canonical basis elements in $L(\lambda)$ having nonzero coefficient in $x\eta_\lambda$ depends only on $W$. If $v\leq w$ in $W$, then $\bfB_v^{tp}(\lambda)\subset \bfB_w^{tp}(\lambda)$. This induces another partition $\bfB_w^{tp}$ of $\bfB$.
\end{itemize}
It is conjectured in \cite{Lus21} that the two partitions are equal.

Meanwhile, we have a third partition by Kashiwara, via Demazure modules:
\begin{itemize}
    \item For each $w\in W$, the extremal weight vector $\eta_{w\lambda}$ generates a $\bfU^+$-submodule $L_w(\lambda)$ of $L(\lambda)$ called the Demazure module. In \cite{Kas93}, Kashiwara showed that the canonical basis for $L(\lambda)$ is also compatible with the Demazure modules. This induces a third partition $\bfB_w^{rep}$ of $\bfB$.
\end{itemize}
This partition was also studied independently by Lusztig in \cite{Lus94b}, by using monomials in the generators to define a subspace $\bfU_w^+$ of $\bfU^+$ and intersecting it with $\bfB$.

\subsection{Main results}
\label{sec:intro:main}

The main goal of this paper is to show that the three partitions of $\bfB$ in \ref{sec:intro:CBTP} coincide, hence proving Lusztig's conjecture in \cite{Lus21}.
\begin{thmintro}
For any $w\in W$, we have
\[
\bfB_w^{\{1\}}=\bfB_w^{tp}=\bfB_w^{rep}.
\]
\end{thmintro}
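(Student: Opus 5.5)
We prove the two equalities $\bfB_w^{\{1\}}=\bfB_w^{rep}$ and $\bfB_w^{tp}=\bfB_w^{rep}$ separately, with the Demazure partition $\bfB_w^{rep}$ as the common reference. Since all three families are increasing in $w$ for the Bruhat order (so that each really induces a partition), it is enough to prove the equalities of the increasing subsets $\bfB_{\le w}$ for every $w$.

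\textbf{First equality.} Recall that $\bfU_w^+$, in Lusztig's description \cite{Lus94b}, is spanned by the monomials $E_{j_1}^{(c_1)}\cdots E_{j_m}^{(c_m)}$ for a reduced word $(j_1,\dots,j_m)$ of $w$, and that $\bfB\cap\bfU_w^+$ spans $\bfU_w^+$. We fix a reduced word $\bfi$ of $w_0$ of the form $\bfi=(\bfi',\bfi'')$, where $\bfi''$ is a reduced word of $w$ placed at the end, which is the side acting first on $\eta_\lambda$.

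The PBW basis attached to $\bfi$ consists of ordered products of divided powers of root vectors $E_{\beta_k}^{(a_k)}$. Each canonical basis element $b(\bfa)$ equals its PBW element plus a $v^{-1}\BZ[v^{-1}]$-combination of PBW elements that are lower in a lexicographic-type order. This triangularity is compatible with the supports of the exponent vectors.

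When $\bfa$ is supported in the last $\ell(w)$ positions, the PBW monomial lies in $\bfU_w^+$, since the root vectors $E_{\beta_k}$ for $\beta_k\in\Phi(w^{-1})$ generate $\bfU_w^+$. Triangularity together with bar invariance then gives $b(\bfa)\in\bfU_w^+$. For a general $\bfa$, we use the Demazure-product description: if the Demazure product of $\{s_{i_k}: a_k\neq 0\}$ is $\le w$, we must show that $b(\bfa)$ still lies in $\bfU_w^+$. The plan is to change reduced words by braid moves. Lusztig's piecewise-linear transition maps preserve the Demazure product of the support; this is the combinatorial input of \cite{Lus19}*{Section~10}, and we verify it on rank-2 braid moves. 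Since the Demazure product is $\le w$, one can then reach a word whose support lies inside a subword for some $v\le w$ placed at the end. The converse inclusion follows by counting: both sets are parametrized compatibly, via the string/PBW bijection, by the same Demazure crystal.

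\textbf{Second equality.} For $x\in\CB_{e,w,>0}$, write $x\eta_\lambda = u\,\eta_\lambda$ with $u=y_{j_1}(t_1)\cdots y_{j_m}(t_m)$ for a reduced word of $w$ and $t_i>0$, suitably normalized so that it lands in the correct cell.

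Lusztig's positivity says that the Chevalley generators act on $\bfB(\lambda)$ with coefficients in $\BN[v,v^{-1}]$, specialized at $v=1$. Hence, after expanding $u$, the coefficient of each $b$ in $u\,\eta_\lambda$ is a positive polynomial in the $t_i$, and no cancellation occurs. The support is therefore exactly the union of the supports of the monomials $f_{j_1}^{(c_1)}\cdots f_{j_m}^{(c_m)}\eta_\lambda$.

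By Kashiwara \cite{Kas93}, transported to the lowest-weight side via the symmetry exchanging $E$ and $F$, this union of supports is precisely the set of $b$ in the Demazure module $L_w(\lambda)$, that is, $\bfB_w^{rep}(\lambda)$. Letting $\lambda\to\infty$ and using the compatible embeddings $\bfB(\lambda)\hookrightarrow\bfB$ gives $\bfB_w^{tp}=\bfB_w^{rep}$.

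\textbf{Main obstacle.} The delicate step is the first equality for general $\bfa$ whose support is not a terminal segment. The key points are to check that the Demazure product of the support is invariant under the tropical braid transition maps, and to handle the positions where $a_k=0$ can become nonzero after a transition. I would try to settle this directly by a rank-2 computation of types $A_2$ and $A_1\times A_1$, which suffices in the simply-laced case.
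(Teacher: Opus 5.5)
The second equality $\bfB_w^{tp}=\bfB_w^{rep}$ is fine and is essentially the paper's Proposition~\ref{tp=rep}. Positivity rules out cancellation. Each monomial $F_{j_1}^{(c_1)}\cdots F_{j_n}^{(c_n)}\eta_\lambda$ lies in $L_w(\lambda)$, and these monomials span it, so the union of their supports is the basis $\bfB_w(\lambda)$. No transport to a lowest-weight side is needed. The first equality, however, has genuine gaps.

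\textbf{The PBW step is wrong.} Lusztig's $\bfU_w^+=\sum\BQ(q)E_{j_1}^{(c_1)}\cdots E_{j_n}^{(c_n)}$ is not a subalgebra, so it is not generated by root vectors. In type $A_2$, $E_1,E_2\in\bfU^+_{s_1s_2}$, but the $(\alpha_1+\alpha_2)$-weight space of $\bfU^+_{s_1s_2}$ is $\BQ(q)E_1E_2$, so $E_2E_1\notin\bfU^+_{s_1s_2}$. PBW monomials on a segment of $\bfi$ span a quantum-Schubert-cell-type subalgebra, which is a different space.

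Concretely, in type $A_2$ the middle root of either convex order is $\alpha_1+\alpha_2$. So for $\ell(w)=2$ your terminal segment always contains $E_{\alpha_1+\alpha_2}$, a $q$-commutator of $E_1,E_2$ with both terms nonzero. It lies in neither $\bfU^+_{s_1s_2}$ nor $\bfU^+_{s_2s_1}$.

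PBW triangularity also does not respect supports. For $\bfi=(1,2,1)$, the canonical basis element with datum $(0,1,0)$ is not bar-equal to its PBW element. It has a nonzero component on the PBW element with datum $(1,0,1)$.

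\textbf{The combinatorial input is false.} The Demazure product of $\{s_{i_k}:a_k\neq 0\}$ is not invariant under the tropical braid moves. The datum $(1,0,0)$ for $(i,j,i)$ becomes $(0,0,1)$ for $(j,i,j)$. So this element (which is $\tilde f_i(1)$ when $\bfi=(i,j,i)$) gets $s_i$ in one chart and $s_j$ in the other. What is invariant, and what defines $\bfB_w^{\{1\}}$ in Section~\ref{sec:bases:U1strat}, is the Demazure product $\chi$ over the positions with $a_k=0$, with the twisted condition $\chi\ge ww_0$. Your converse "by counting" also presupposes the identification being proved.

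\textbf{What is missing.} You need an explicit description of $\bfB_w^{rep}$ in Lusztig coordinates. The paper obtains it from the crystal, not from PBW bases, in Proposition~\ref{demcone}: in every chart $\bfi$, $\CU^{rep}_{w,\bfi}$ is the union of the coordinate cones $\{x_j=0 \text{ for } j\in A\}$ over the reduced subwords $A$ of $ww_0$ in $\bfi$.

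The proof is by induction on $\ell(w)$ using Kashiwara's recursion $\bfB_{s_iw}=\bigcup_k\tilde f_i^k\bfB_w$. This is computed in a chart with $i_1=i$, where $\tilde f_i$ only shifts $a_1$. Lemma~\ref{lem:Ssw} and the exchange property absorb the cones with $1\notin A$. A rank-two check then shows this union of cones is preserved by braid moves.

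The equality with $\bfB_w^{\{1\}}$ is then the subword property. The zero positions have Demazure product $\ge ww_0$ if and only if they contain a reduced subword for $ww_0$.
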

This is done in Section \ref{sec:bases}, where we identify the three partitions with the same subset of $\BN^{\ell(w_0)}$.

The different characterizations of this partition on $\bfB$ leads to the further results below.

\subsubsection{Richardson compatibility}

In Section~\ref{sec:richardson}, we study the subset $\bfB_{v,w}^{tp}(\lambda)$ defined using the action of $\CB_{v,w,>0}$ on $L(\lambda)$. Assuming $\lambda$ is regular, we show that it is equal to the intersection of the canonical bases for the corresponding Demazure and opposite Demazure modules. This is parallel to the definition of open Richardson varieties as intersections of Schubert and opposite Schubert cells, and give further evidence to the deep relation between total positivity and canonical bases discovered by Lusztig.

\subsubsection{Saturation of Bruhat interval polytopes}

For each dominant weight $\lambda$, the weights of the weight vectors in $L(\lambda)$ define a subset of the weight lattice $X$. Its convex hull in $X_\BR$ is a polytope $P(\lambda)$ with vertices given by the extremal weights $w\lambda$ for $w\in W$. It is well known that the weights of $L(\lambda)$ is saturated, i.e. any $\mu\in X$ is a weight of $L(\lambda)$ if and only if $\mu\in P(\lambda)$ and $\lambda-\mu$ is in the root lattice.

The Bruhat interval polytope $P_{v,w}(\lambda)$ is defined as the convex hull of $u\lambda$ for $v\leq u\leq w$ in $X_\BR$ \cite{KW15}. An alternative definition of the polytope involves the moment map of the torus action on $\CB$, see \citelist{\cite{Ati82}\cite{GS82}}.

Bruhat interval polytopes of the form $P_{e,w}(\lambda)$ were studied by Besson, Jeralds, and Kiers in \cite{BJK}. They proved that the saturation extends to the Demazure setting for all finite types except type $E$.

In Section \ref{sec:saturation}, we extend the saturation of $P(\lambda)$ and $P_{e,w}(\lambda)$ to $P_{v,w}(\lambda)$ for general $v\leq w\in W$ in type $A$ and regular $\lambda$, where the weights of $L(\lambda)$ are replaced by the weights of $\bfB_{v,w}^{tp}(\lambda)$. This gives an interesting application for the Richardson compatibility.

\vspace{.2cm}

\noindent {\bf Acknowledgment: } The author would like to thank Huanchen Bao for valuable discussions and suggestions.

\section{Preliminaries}\label{sec:prelim}        

\subsection{The monoid $G_{\ge 0}$}
\label{sec:prelim:basic}

Let $G$ be a split connected reductive group over $\BR$. We shall identify $G$ with the set of $\BR$-rational points $G(\BR)$. We fix a pinning $(T,B^+,B^-,x_i,y_i;i\in I)$ of $G$ following \cite{Lus94a}. Let $U^\pm$ be the unipotent radicals of $B^{\pm}$, respectively. Let $X$ be the weight lattice and $Y$ be the coweight lattice. Let $\{\alpha_i\}_{i\in I}$ be the simple positive roots, $\{\alpha_i^\vee\}_{i\in I}$ the corresponding simple positive coroots, $\{\omega_i\}_{i\in I}$ the fundamental weights, and $\{\omega_i^\vee\}_{i\in I}$ the fundamental coweights. Let $A=(a_{ij})_{i,j\in I}$ be the Cartan matrix. We shall assume $G$ is of simply-laced type throughout this paper.

Let $W$ be the Weyl group of $G$, with simple reflections $s_i\in W$ for $i\in I$. Let $w_0$ be the longest element of $W$ with length $m=\ell(w_0)$. Let $\CI$ be the set of reduced words of $w_0$, i.e. sequences $\bfi=(i_1,\cdots,i_m)$ of elements in $I$ such that $s_{i_1}\cdots s_{i_m}=w_0$. Let $\le$ be the Bruhat order on $W$.

For $i\in I$, let $\dot{s}_i=x_i(1)y_i(-1)x_i(1)\in G$. For any $w\in W$ with reduced expression $s_{i_1}\cdots s_{i_k}$, let $\dot{w}=\dot{s}_{i_1}\cdots \dot{s}_{i_k}$.  It is known that $\dot{w}$ is independent of the choice of the reduced expressions.
        
Let $U^-_{\geq0}$ be the monoid in $U^-$ generated by $y_i(a)$ for $i\in I$, $a\in \BR_{>0}$.  Let $U^+_{\geq0}$ be the monoid in $U^+$ generated by $x_i(a)$ for $i\in I$, $a\in \BR_{>0}$. Let $T_{>0}$ be the identity component of $T(\BR)$. Let $G_{\ge 0}$ be the submonoid of $G$ generated by $U^{\pm}_{\ge 0}$ and $T_{>0}$. It follows from \cite{Lus94a} that $G_{\ge 0} = U^{+}_{\ge 0} T_{>0}U^{-}_{\ge 0} = U^{-}_{\ge 0} T_{>0}U^{+}_{\ge 0}$.

\subsection{Canonical bases}
\label{sec:prelim:bases}

Let $\bfU$ be the quantum group associated to the root data of the reductive group $G$. It is a $\BQ(q)$-algebra generated by $E_i$, $F_i$ for $i\in I$ and $K_\mu$ for $\mu \in Y$. We denote  by $E_{i}^{(n)}$ and $F_i^{(n)}$ the divided powers defined in \cite{Lus93}*{\S3.1}.  Let $\mathbf{U}^+$ (resp. $\mathbf{U}^-$) be the subalgebra of $\mathbf{U}$ generated by $E_i$ ($i\in I$) (resp. $F_i$ ($i\in I$)). Let $\bfB$ be the canonical basis of $\bfU^-$.  

Let $X^+$ be the set of dominant weights. For any $\lambda \in X^+$, let $L(\lambda)$ be the integrable highest weight $\bfU$-module  defined in \cite{Lus93}*{Proposition~3.5.6}. Let $\eta_\lambda$ be the highest weight vector. Let $\bfB(\lambda)$ be the subset of $b\in \bfB$ such that $b\eta_\lambda\neq 0$. By \cite{Lus90}*{Section~8}, the set of elements $b\eta_\lambda$ over $b\in \bfB(\lambda)$ form a basis of $L(\lambda)$, which is called the canonical basis of $L(\lambda)$. Via the identification $b\mapsto b\eta_\lambda$, we will abuse notation and view $\bfB(\lambda)$ also as a subset of $L(\lambda)$.

Let $w\in W$. We define $L_w(\lambda) = \bfU^+ \eta_{w\lambda} \subset L(\lambda)$ and $\bfB_w^{rep}(\lambda) = \bfB(\lambda) \cap L_w(\lambda)$. It follows from \cite{Kas94}*{Lemma~8.2.1} that $\bfB_w^{rep}(\lambda)$ is a basis of $L_w(\lambda)$.

Kashiwara defined crystal structures on $\bfB(\lambda)$ and $\bfB$ in \cite{Kas94}. The crystal structure comes with Kashiwara operators $\Tilde{e}_i$ and $\Tilde{f}_i$ for $i\in I$ sending $\bfB(\lambda)$ (resp. $\bfB$) to $\bfB(\lambda)\cup \{0\}$ (resp. $\bfB \cup \{0\}$). 

\begin{remark}
To be precise, the crystal structure is defined on the limit at $q=\infty$ of the basis  $\bfB$. Since this limit is in natural bijection with $\bfB$, we shall assume the crystal structure is given on $\bfB$.
\end{remark}

\subsection{Tropical parametrization of the canonical basis}
\label{sec:prelim:tropflag}
        
Following \cite{Lus97}, we consider a parametrization of $\bfB$ using tropical geometry. 
        
For any $(\bfi,\bfa)$ and $(\bfi',\bfa')$ in $\CI\x \BZ^m$, we say that they are \textit{adjacent} if one of the two following conditions hold:

\begin{itemize}
    \item $\bfi'$ is obtained from $\bfi$ by replacing two consecutive indices $(i,j)$ (with $s_i s_j =s_j s_i$) by $(j,i)$ such that $\bfa'$ is obtained from $\bfa$ by replacing the two consecutive coordinates $(a,b)$ if $\bfa$ (corresponding to the positions in which $i,j$ appear in $\bfi$) by $(b,a)$;
    
    \item  $\bfi'$ is obtained from $\bfi$ by replacing three consecutive indices $(i,j,i)$ (with $s_i s_j s_i =s_j s_i s_j$) by $(j,i,j)$ such that $\bfa'$ is obtained from $\bfa$ by replacing the three consecutive coordinates $(a,b,c)$ if $\bfa$ (corresponding to the positions in which $i,j,i$ appear in $\bfi$) by $(a',b',c')$, where $a' = b + c - \min(a,c)$, $b' = \min (a,c)$, $c'= a+b - \min(a,c)$.
\end{itemize}

Let $\CU_\BZ$ be the set of equivalence classes on $\CI\x \BZ^m$ for the equivalence relation defined using the adjacency relations above. Since any two reduced words of $w_0$ are related by braid moves $(ij)\sim(ji)$ and $(iji)\sim(jij)$, each reduced word $\bfi\in \CI$ gives a bijection $\CU_\BZ\xrightarrow{\sim} \BZ^m$. Furthermore, the subset $\CI\x \BN^m$ in $\CI\x \BZ^m$ is closed under adjacency relations, so we get a subset $\CU_\BN\subset \CU_\BZ$. This set $\CU_\BN$ is first considered in \cite{Lus90}*{Section~2.1}.

For any $i\in I$, fix any reduced word $\bfi = (i_1, \dots, i_m)\in \CI$ with $i_1=i$. We have the following well-defined maps by \cite{Lus97}*{\S2.2 \& \S 2.3}:
\begin{itemize}
    \item the map $\ve_i:\CU_\BZ\to \BZ$ given by $\ve_i(\bfi,\bfa)=a_1$;
    \item for any $c\in \BZ$, the map $T_{i,c} : \CU_{\BZ} \rightarrow \CU_{\BZ}$ given by
    \[
    T_{i,c}(\bfi,\bfa)=(\bfi,(a_1+c,a_2,\cdots,a_m)).
    \]            
\end{itemize}

We have an explicit bijection $u : \CU_{\BN} \xrightarrow{\sim} \bfB$ following \cite{Lus93}*{\S42} such that 
\begin{itemize}
    \item $u((\bfi, (0, \dots, 0))) = 1$ is the identity element in $\bfB$;
    \item   $\tilde{f}_i u (\zeta) = u (T_{i,1} \zeta)$;
    \item   $\tilde{e}_i u (\zeta) = u (T_{i,-1} \zeta)$ if $\varepsilon_i(\zeta) \ge 1$;
    \item   $\tilde{e}_i u (\zeta) = 0$ if $\varepsilon_i(\zeta) =0$.
\end{itemize}
For any $\lambda \in X^+$, let $\CU_{\BN, \lambda}$ be the subset of $\CU_{\BN}$ in bijection with $\bfB(\lambda)$ via the bijection $u$.

\subsection{Flag varieties}
\label{sec:prelim:flag}

Let $\CB =G/B^+$ be the flag variety of $G$. For any $ w \in W$, we write $\mathring{\CB}_w = B^+ \dot{w} B^+ / B^+$ and $\mathring{\CB}^w = B^- \dot{w} B^+ /B^+$. We denote by ${\CB}_w$ and $\CB^w$ the Zariski closure of $\mathring{\CB}_w$ and $\mathring{\CB}^w $ in $\CB$, respectively. For any $ v, w \in W$, we set $\mathring{\CB}_{v,w} = \mathring{\CB}_w \cap \mathring{\CB}^v$. This is the open Richardson variety. It is known that the intersection if non-empty is and only if $v \le w$.  We have the decomposition
\[
\CB=\bigsqcup_{v\leq w} \mathring{\CB}_{v,w}.
\]
        
Let $\CB_{\geq0}$ be the closure of $U^-_{\geq0}B^+/B^+$ in $\CB$ with respect to the Hausdorff topology. Let $\CB_{v,w,>0}=\mathring{\CB}_{v,w}\cap \CB_{\geq0}$.

\subsection{Deodhar components}
\label{sec:prelim:Deodhar}

We follow \cite{MR04}. For $w\in W$, an \textit{expression} for $w$ is a finite sequence $\bfw=(i_1,\cdots,i_n)$ in $I$ such that $w=s_{i_1}\cdots s_{i_n}$. By abuse of notations, we also write the expression as $\bfw=s_{i_1}\cdots s_{i_n}$. We define the length $\ell(\bfw)=n$. The expression $\bfw$ is said to be \textit{reduced} if the length $n$ is minimal among all expressions for $\bfw$, in which case we we have $\ell(w)=\ell(\bfw)$.

A \textit{subexpression} of $\bfw$ is a sequence $\bfv=(i'_1,\cdots,i'_n)$ such that $i'_k\in \{i_k,e\}$ for all $k$. 
We further write $v=s'_{i_1}\cdots s'_{i_n}\in W$, where $s'_{i_k}=s_{i_k}$ if $i'_k=i_k$, and $s'_{i_k}=e$ if $i'_k=e$. We say that $\bfv$ is a subexpression for $v$ in $\bfw$.

Let $\bfw=(i_1,\cdots,i_n)$ be a reduced expression for $w$. For $v\leq w$, we define the \textit{right-most} subexpression $\bfv$ for $v$ in $\bfw$ as follows:
\begin{itemize}
\item If $vs_{i_n}<v$, then $i'_n=i_n$ and we let $(i'_1,\cdots,i'_{n-1})$ to be the right-most subexpression for $vs_{i_n}$ in $(i_1,\cdots,i_{n-1})$.
\item If $vs_{i_n}>v$, then $i'_n=e$ and we let $(i'_1,\cdots,i'_{n-1})$ to be the right-most subexpression for $v$ in $(i_1,\cdots,i_{n-1})$.
\end{itemize}

For any right-most subexpression $\bfv$ in $\bfw$, let $J^+$ be set of the indices $k$ such that $i'_k=i_k$, and $J^\circ$ be the set of indices $k$ such that $i_k=e$. Consider the subset $\CR_{\bfv,\bfw}$ in $\CB$ given by $\CR_{\bfv,\bfw}=\{g_1\cdots g_nB^+\}$, where $g_k=\dot{s}_{i_k}$ if $k\in J^+$, and $g_k=y_{i_k}(t_k)$ if $k\in J^\circ$, for some $t_k\in \BC^\x$. This defines a map $(\BC^\x)^{|J^\circ|}\to \CR_{\bfv,\bfw}$, which is an isomorphism by \cite{Deo85}*{Theorem 1.1}.

Marsh and Rietsch showed that this induces a parametrization of $\CB_{v,w,>0}$. 

\begin{thm}{\cite{MR04}*{Theorem 11.3}}
\label{MRparametrization}
Let $\bfw$ be a reduced expression for $w$ and $\bfv$ the right-most subexpression for $v$ in $\bfw$. The map $(\BC^\x)^{|J^\circ|}\to \CR_{\bfv,\bfw}$ restricts to an isomorphism $\BR_{>0}^{|J^\circ|}\to \CB_{v,w,>0}$.
\end{thm}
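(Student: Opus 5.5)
This is the Marsh–Rietsch parametrization theorem, which the paper cites rather than proves. I would follow the Marsh–Rietsch argument. The parametrization rests on three inputs:
\begin{itemize}
\item Deodhar's isomorphism $(\BC^\x)^{|J^\circ|}\to \CR_{\bfv,\bfw}$, which is already available.
\item Lusztig's description of $\CB_{\geq0}$: $\CB_{v,w,>0}$ is a semi-algebraic cell of dimension $\ell(w)-\ell(v)$.
\item A positivity argument identifying the image of $\BR_{>0}^{|J^\circ|}$ with the totally positive part.
\end{itemize}
Since $\bfv$ is the right-most (equivalently positive, in the Marsh–Rietsch sense) subexpression, $|J^\circ|=\ell(w)-\ell(v)$. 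Moreover $\CR_{\bfv,\bfw}$ is the unique Deodhar component of $\mathring{\CB}_{v,w}$ of top dimension; it is open dense there.

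\textbf{Step 1: the image of $\BR_{>0}^{|J^\circ|}$ lies in $\CB_{\geq0}$.} Write $g=g_1\cdots g_n$. For each $k\in J^+$, the position is forced by the right-most condition, which means $v_{(k)}s_{i_k}<v_{(k)}$ in terms of the partial products. I would move each factor $\dot s_{i_k}$ to the right using the identity
\[
y_i(t)\dot s_i = x_i(t^{-1})\,\alpha_i^\vee(t)\,y_i(-t^{-1})\cdot(\text{something in }B^+),
\]
together with commutation of $y_j(t)$ past $\dot s_i$ when the relevant root stays positive. Done inductively from the right, this rewrites $gB^+$ as $u\,B^+$ with $u\in U^-_{\geq0}$ up to terms that are absorbed, or at least as a limit of elements of $U^-_{\ge0}B^+/B^+$. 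The cleanest way to organize this is to degenerate: replace each $\dot s_{i_k}$ by $y_{i_k}(s)$ with $s\to\infty$, since $y_i(s)B^+\to \dot s_iB^+$ in the appropriate sense on the relevant Schubert cell. This exhibits $gB^+$ as a limit of points $y_{i_1}(\cdot)\cdots y_{i_n}(\cdot)B^+$ with positive parameters. That gives $gB^+\in\CB_{\geq0}$, and hence $gB^+\in \CB_{v,w,>0}$.

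\textbf{Step 2: the image is open and closed in $\CB_{v,w,>0}$.} The map is injective because it restricts Deodhar's isomorphism. Both sides are manifolds of dimension $\ell(w)-\ell(v)$, so invariance of domain makes the image open. For closedness, I would show that if parameters go to $0$ or $\infty$, then the limit point leaves $\mathring{\CB}_{v,w}$ or leaves $\CR_{\bfv,\bfw}$. Points of $\CB_{\ge0}\cap\mathring{\CB}_{v,w}$ outside $\CR_{\bfv,\bfw}$ must be excluded. For this I would use generalized minors (chamber minors) $\Delta_{\omega_i}$ evaluated along the expression. Marsh and Rietsch express the $t_k$ as ratios of such minors, which are positive on $\CB_{v,w,>0}$; positivity of these minors forces the point into $\CR_{\bfv,\bfw}$ with all $t_k>0$.

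\textbf{Conclusion and main obstacle.} Connectedness of $\CB_{v,w,>0}$, which is a cell by Rietsch's result, combined with open-closedness gives surjectivity. The inverse is given by the minor ratios, which are regular, so the map is an isomorphism. The main obstacle is Step 2, specifically proving that every totally nonnegative point of $\mathring{\CB}_{v,w}$ lies in the positive Deodhar component. This needs the explicit minor formulas for the $t_k$ and the fact that these minors are nonvanishing (indeed positive) on $\CB_{v,w,>0}$. I would attempt it by induction on $\ell(w)$, peeling off $i_n$ and using the fibration $\mathring{\CB}_{v,w}\to\mathring{\CB}_{v',ws_{i_n}}$.
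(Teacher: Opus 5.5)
The paper gives no proof of this statement; it is quoted from Marsh--Rietsch, so your sketch has to stand on its own. Your architecture is reasonable: the positive image lies in $\CB_{v,w,>0}$, is open and closed there, and $\CB_{v,w,>0}$ is connected by Rietsch's cell theorem. However, neither substantive step is actually established.

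\textbf{Step 1.} The degeneration does not converge to the intended point. Take $G=SL_3$, $\bfw=(1,2)$, $v=s_1$. The right-most subexpression is $(1,e)$, so $\CR_{\bfv,\bfw}=\{\dot s_1y_2(t)B^+\}$.

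The flag $y_1(s)y_2(t)B^+$ has line $[1:s:0]$ and plane with Pl\"ucker vector $(\Delta_{12},\Delta_{13},\Delta_{23})=(1,t,st)$. For fixed $t$ it therefore tends to the $T$-fixed point $\dot s_1\dot s_2B^+$, which lies in a different Richardson cell. You have to rescale the tail: for instance $y_1(s)y_2(t/s)B^+$ has plane vector $(1,t/s,t)$, which tends to $(1,0,t)$.

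More seriously, Step 1 depends on the sign normalization of $\dot s_i$, and your argument never uses it. With $\dot s_i=x_i(1)y_i(-1)x_i(1)$ as fixed in Section~\ref{sec:prelim:basic}, the point $\dot s_1y_2(t)B^+$ has line $e_2$ and plane vector $(1,0,-t)$. All flag minors are $\geq 0$ on $U^-_{\geq0}B^+/B^+$, so every point of $\CB_{\geq0}$ has, at each level, a Pl\"ucker vector that is nonnegative up to sign. Hence for $t>0$ this point is \emph{not} in $\CB_{\geq0}$. The limit computed above is $\dot s_1y_2(-t)B^+=\dot s_1^{-1}y_2(t)B^+$.

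So the containment you want, and indeed the statement itself, holds only for $\dot s_i=x_i(-1)y_i(1)x_i(-1)$. A correct proof of Step 1 must visibly use that choice. A limiting argument blind to it cannot be right, nor can one that passes through $y_i(-t^{-1})$ as your first identity does.

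\textbf{Step 2.} Closedness is where the content lies, and you assume it. Given Step 1 and connectedness, closedness is equivalent to the assertion that every point of $\CB_{v,w,>0}$ lies in $\CR_{\bfv,\bfw}$ with all $t_k>0$.

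The minors in the Chamber Ansatz formulas are controlled a priori only on the image of $\BR_{>0}^{|J^\circ|}$, where they are positive, and, by continuity, weakly on its closure. That they vanish nowhere on $\CB_{v,w,>0}$ is essentially the inclusion $\CB_{v,w,>0}\subset\CR_{\bfv,\bfw}$ itself, so invoking it is circular.

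Your fallback induction through $\mathring{\CB}_{v,w}\to\mathring{\CB}_{v',ws_{i_n}}$ has two further holes. First, the truncation $x\mapsto x_{n-1}$ of the Deodhar chain lands in $B^-\dot v'B^+$ with $v'\in\{v,vs_{i_n}\}$. When $vs_{i_n}>v$, which one depends on the Deodhar component of $x$, and that is exactly what is unknown. Second, you would need this truncation to preserve total nonnegativity, a nontrivial input you do not supply.

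Two minor points. Invariance of domain is unnecessary, since $\CR_{\bfv,\bfw}$ is Zariski open in $\mathring{\CB}_{v,w}$ and the positive orthant is open in $(\BR^\times)^{|J^\circ|}$. Also, the cell theorem is Rietsch's, not Lusztig's.
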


\section{Stratifications of the canonical bases}
\label{sec:bases}

In this section, we recall several stratifications of the canonical basis $\bfB$ of $U_q^-(\fg)$, or equivalently, the set $\CU_{\BN}$ by Section \ref{sec:prelim:tropflag}. We show along the way that they are in fact equivalent.

\subsection{Stratification by representation theory}
\label{sec:bases:repn}
        
For any $w\in W$ and $\lambda \in X^+$, let $\eta_{w\lambda}\in L(\lambda)$ be the unique canonical basis element of weight $w\lambda$. We define the Demazure module $L_w(\lambda) =  U_q^+(\fg) \eta_{w\lambda} \subset L(\lambda)$ as a $U_q^+(\fg)$-module. We have $L_v(\lambda) \subset L_w(\lambda)$ if and only if $v \le w$.

Let $e \in \bfB$ be the identity element of $U_q^-(\fg)$. For any $w \in W$, Kashiwara \cite{Kas93}*{Proposition~3.2.5} defined a subset $\bfB_w^{rep} \subset \bfB$ by 
\[
\bfB_w^{rep} = \{\tilde{f}_{i_1}^{a_{1}}\tilde{f}_{i_2}^{a_{2}}\cdots \tilde{f}_{i_n}^{a_{n}} (e) \,\vert\, s_{i_1} s_{i_2} \cdots s_{i_n} \text{ is a reduced expression of }w, a_i \in \BZ_{\ge 0}\}.
\]
It was further shown that the action map $U_q^-(\fg) \rightarrow L(\lambda)$, $u \mapsto u \eta_\lambda$, maps $\bfB_w^{rep}$ to the canonical basis $\bfB_w^{rep}(\lambda) \subset \bfB(\lambda)$ of $L_w(\lambda)$ or $0$.

We note that $\bfB_w^{rep}$ is the same as the partition defined by Lusztig in \cite{Lus94b}*{Section~5.3}. See \cite{Lus94b}*{Proposition~4.2} and \cite{Kas93}*{Corollary~3.2.2} for more details on the equivalence.

We define $\CU_w^{rep}$ be the image of $ \bfB_w^{rep}$ under the bijection $\bfB \leftrightarrow \CU_{\BN}$. Therefore we conclude from \cite{Kas93}*{Proposition~3.2.5} that

\begin{enumerate}[label=(\alph*)]
    \item $\CU_{e}^{rep} = \{\bfzero\}$, where $\bfzero=(\bfi,(0,\cdots,0))$;
    \item if $w < s_i w$, then we have $     \CU_{s_iw}^{rep}=\bigcup_{k\geq0} \Tilde{f}_i^k \CU_{w,rep}$;
    \item if $v \le w$, then we have $\CU_v^{rep} \subset \CU_w^{rep}$.
\end{enumerate}
        
Recall the bijection $f_{\bfi}: \CU_{\BN} \xrightarrow{\sim} \BN^{\ell(w_0)}$ for any reduced expression $\bfi \in \CI$ of the longest element $w_0$.  Let $\CU_{w, \bfi}^{rep}$ be the image of $\CU_w^{rep}$ under the bijection. For any subset $A  \subset \{1, \dots, m = \ell(w_0)\}$, we define the coordinate cone $\CU_{A, \bfi}^{rep} \subset \BN^{m}$ by 
\[
\CU_{A, \bfi}^{rep} = \{(x_1,\cdots, x_m) \in \BN^m\,|\, x_j=0 \text{ for } j\in A\}.
\]
Let $w \in W$ and $\bfi \in \CI$. Let $S_{w,\bfi}$ be the set of reduced subexpression of $ww_0$ in $\bfi$. We shall identify elements in $S_{w,\bfi}$ with subsets of $\{1, \dots ,m\}$, by taking the indices $k$ such that $i_k'\neq e$.

\begin{lem}\label{lem:Ssw}
Let $w \in W$ be such that $s_iw \ge w$ for some $i \in I$. Let $\bfi = (i_1, \dots, i_m)$ be a reduced expression of $w_0$ such that $i_1 =i$. We have
\[
S_{s_iw, \bfi} = \{A - \{1\} \,\vert\, A \in S_{w, \bfi}, 1  \in A\} 
\]
\end{lem}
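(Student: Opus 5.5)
The plan is to compare lengths directly: all three conditions reduce to the length identity for $w$, $s_iw$ and $w_0$. Throughout, for a subset $A \subset \{1,\dots,m\}$ write $s_A$ for the ordered product $\prod_{k\in A} s_{i_k}$, taken in increasing order of $k$. By the identification fixed before the lemma, $A \in S_{u,\bfi}$ means exactly that $s_A = uw_0$ and $|A| = \ell(uw_0)$; the second condition is what it means for the subexpression to be reduced.

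First I record the length bookkeeping. For any $u \in W$ we have $\ell(uw_0) = \ell(w_0) - \ell(u)$. Since $s_iw > w$, this gives
\[
\ell(s_iww_0) = \ell(w_0) - \ell(w) - 1 = \ell(ww_0) - 1 .
\]
Moreover $ww_0 = s_i\,(s_iww_0)$, so left multiplication by $s_i$ takes $s_iww_0$ to the longer element $ww_0$.

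Next I prove the inclusion $\supseteq$. Let $A \in S_{w,\bfi}$ with $1 \in A$. Since $i_1 = i$, we have $s_A = s_i\, s_{A-\{1\}}$, hence $s_{A-\{1\}} = s_i ww_0$. Its size is $|A| - 1 = \ell(ww_0) - 1 = \ell(s_iww_0)$, so $A - \{1\}$ is a reduced subexpression for $s_iww_0$ in $\bfi$. This means $A - \{1\} \in S_{s_iw,\bfi}$.

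Then I prove the inclusion $\subseteq$. Let $B \in S_{s_iw,\bfi}$, so $s_B = s_iww_0$ and $|B| = \ell(s_iww_0)$.
\begin{itemize}
\item I first show $1 \notin B$. Suppose instead $1 \in B$. Then $s_{B-\{1\}} = s_i s_iww_0 = ww_0$, which is a product of $|B| - 1 = \ell(ww_0) - 2$ simple reflections. This contradicts the definition of length.
\item Now set $A = B \cup \{1\}$. Then $s_A = s_i\, s_B = ww_0$ and $|A| = \ell(s_iww_0) + 1 = \ell(ww_0)$. So $A \in S_{w,\bfi}$, it contains $1$, and $B = A - \{1\}$.
\end{itemize}

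The only point needing care is the exclusion $1 \notin B$ in the second inclusion, and that follows immediately from the length identity above. The hypothesis $i_1 = i$ is used precisely so that position $1$ contributes the factor $s_i$ on the left of $s_A$.
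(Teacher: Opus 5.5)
Your proof is correct and follows the paper's argument: both prove the two inclusions directly, and the only substantive step is showing $1\notin B$ for $B\in S_{s_iw,\bfi}$. The paper gets this from $s_i(s_iww_0)=ww_0>s_iww_0$, while you get it from the equivalent identity $\ell(s_iww_0)=\ell(ww_0)-1$, which just makes the length bookkeeping explicit.
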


\begin{proof}
By definition, $A \subset \bfi$ is a reduced subexpression of $ww_0$ in $\bfi$. If $i_1 \in A$, then $A - \{1\}$ is a reduced subexpression of $s_{i}ww_0 = s_{i_1}ww_0$. This shows      $S_{s_iw, \bfi} \supset \{A - \{1\} \,\vert\, A \in S_{w, \bfi}, 1 \in A\}$.

Let  $A' \in S_{s_iw, \bfi}$. Then $A'$ be a reduced subexpression of $s_{i}ww_0 = s_{i_1}ww_0$ in $\bfi$. Since we have $s_i (s_iww_0)= ww_0 > s_iww_0$, we must have $i_1 \not \in A'$ by our choice of $\bfi$. Let $A = A' \cup \{1\}$. We see that $A \in S_{w, \bfi}$. This shows  $S_{s_iw, \bfi} \subset \{A - \{1\} \,\vert\, A \in S_{w, \bfi}, 1 \in A\}$.

This finishes the proof.
\end{proof}

\begin{prop}
\label{demcone}
We have
\[
\CU_{w, \bfi}^{rep} = \bigcup_{A \in S_{w, \bfi}}\CU_{A, \bfi}^{rep}
\]
\end{prop}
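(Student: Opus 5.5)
We argue by induction on $\ell(w)$, using the recursive description (a)--(c) of $\CU_w^{rep}$ together with Lemma~\ref{lem:Ssw}. A crucial feature is that both sides of the claimed identity are independent of $\bfi$ in a suitable sense: the left side is defined intrinsically in $\CU_\BN$, so it suffices to prove the identity for each $\bfi$, but we are free to choose $\bfi$ adapted to the inductive step. The difficulty is that the right-hand side is defined via $\bfi$, so we must also check that $\bigcup_{A\in S_{w,\bfi}}\CU^{rep}_{A,\bfi}$, viewed as a subset of $\CU_\BN$ via $f_\bfi^{-1}$, does not depend on $\bfi$. We will avoid this by proving the equality directly for every $\bfi$, reducing an arbitrary $\bfi$ to one adapted to the step through braid moves.

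\emph{Base case.} For $w=e$, $ww_0=w_0$, and since $\bfi$ is reduced the only reduced subexpression of $w_0$ in $\bfi$ is $A=\{1,\dots,m\}$. Then $\CU^{rep}_{A,\bfi}=\{0\}$, matching (a). (Here we read $\CU^{rep}_{A,\bfi}$ as the coordinates in $A$ vanishing and the others free; that is the convention consistent with the base case.)

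\emph{Inductive step.} Write $w'=s_iw>w$, and choose $\bfi$ with $i_1=i$, which exists since $w_0$ has a reduced word beginning with any $s_i$. By (b) and the definition of $\ve_i,T_{i,c}$, in the coordinates $f_\bfi$ the operator $\tilde f_i$ adds $1$ to the first coordinate. Hence $\CU^{rep}_{w',\bfi}$ consists of the elements of $\CU^{rep}_{w,\bfi}$ with the first coordinate replaced by an arbitrary element of $\BN$. By induction, $\CU^{rep}_{w,\bfi}=\bigcup_{A\in S_{w,\bfi}}\CU^{rep}_{A,\bfi}$. Freeing the first coordinate turns $\CU^{rep}_{A,\bfi}$ into $\CU^{rep}_{A-\{1\},\bfi}$, so
\[
\CU^{rep}_{w',\bfi}=\bigcup_{A\in S_{w,\bfi}}\CU^{rep}_{A-\{1\},\bfi}.
\]
By Lemma~\ref{lem:Ssw}, the sets $A-\{1\}$ with $1\in A$ are exactly $S_{w',\bfi}$. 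For $A\in S_{w,\bfi}$ with $1\notin A$, we need $\CU^{rep}_{A,\bfi}$ to be contained in some $\CU^{rep}_{A',\bfi}$ with $A'\in S_{w',\bfi}$. Since $ww_0>s_iww_0$, the word $s_i\cdot(\text{subword }A)$ is non-reduced for $ww_0$. By the exchange property, $A\cup\{1\}$ minus some index $k\in A$ gives a reduced subexpression $A''\in S_{w,\bfi}$ with $1\in A''$. Then $A'=A''-\{1\}=A-\{k\}\subset A$, so $\CU^{rep}_{A,\bfi}\subset\CU^{rep}_{A',\bfi}$. This gives the equality for adapted $\bfi$.

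\emph{General $\bfi$.} This is the main obstacle. We must show that the right side transforms correctly under the adjacency moves. For a commutation move, both the coordinates and the subexpressions are simply swapped. For a braid move $(i,j,i)\to(j,i,j)$, we check that the union over subexpressions, restricted to the three affected coordinates, is preserved by the piecewise-linear map $(a,b,c)\mapsto(b+c-\min(a,c),\min(a,c),a+b-\min(a,c))$. This is a finite case check over the possible patterns of a subexpression on three letters, where the relevant local patterns of vanishing coordinates are $\{\}$, $\{1\}$, $\{3\}$, $\{1,2\}$, $\{2,3\}$, $\{1,3\}$, and so on, with the other coordinates fixed. For example, $\{a=0\}\cup\{c=0\}$ maps onto $\{b'=0\}$ since $\min(a,c)=0$. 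Alternatively, we can sidestep the check by inducting with a statement quantified over all $\bfi$, connecting an arbitrary $\bfi$ to one starting with $i$ through such moves. Either way, the braid-move verification is the key computation.
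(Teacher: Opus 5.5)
Your proposal is correct and follows the paper's proof essentially step for step. The shared structure is: induction on $\ell(w)$ using a reduced word $\bfi$ with $i_1=i$; freeing the first coordinate; Lemma~\ref{lem:Ssw} together with the exchange property to absorb the cones with $1\notin A$; and a local check under braid moves to pass to arbitrary $\bfi$.

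The slips are cosmetic. The local pattern $\{1,3\}$ cannot occur in a reduced subexpression. The braid-move verification cannot actually be sidestepped, since it is what lets the induction hypothesis be quantified over all $\bfi$. The local check is naturally organized by the product of the selected local letters, exactly as your example $\{a=0\}\cup\{c=0\}\leftrightarrow\{b'=0\}$ does.
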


\begin{proof}
We proceed by induction on $\ell(w)$. The base case when $w =e$ is trivial. The case when $w = w_0$ is also trivial. We can assume $w \neq e $ and $w \neq w_0$. 
    
Assume the statement holds for all $w' \le w$ with respect to all reduced expressions of $w_0$. Let $s_iw > w$ for some $i \in I$. 
    
(a) Let $\bfi = (i_1, \dots, i_m)$ be a reduced expression $w_0$ such that $i_1 =i$. We claim $\CU_{s_iw, \bfi}^{rep}  = \bigcup_{A \in S_{s_iw, \bfi}} \CU_{A, \bfi}^{rep}$.
    
By definition and the induction hypothesis, we have 
\[
\CU_{s_iw, \bfi}^{rep}= \bigcup_{k \ge0} \tilde{f}_i^k\CU_{w, \bfi}^{rep} = \bigcup_{A \in S_{w, \bfi}}\big(\bigcup_{k \ge0} \tilde{f}_i^k\CU_{A, \bfi}^{rep}\big).
\]
Note that $\tilde{f}_i^k (\bfi, (a_1, a_2, \dots, a_m)) = (\bfi, (a_1+k, a_2, \dots, a_m))$ due to our choice of $\bfi$. We conclude that  $\cup_{k \ge0} \tilde{f}_i^k\CU_{A, \bfi, rep} = \CU_{A - \{1\}, \bfi, rep}$. Hence by Lemma~\ref{lem:Ssw},  we have   
\[
\CU_{s_iw, \bfi}^{rep} = \bigcup_{A \in S_{w, \bfi}} \CU_{A -\{1\}, \bfi}^{rep} \supset \bigcup_{A \in S_{w, \bfi}, 1 \in A} \CU_{A -\{1\}, \bfi}^{rep} = \bigcup_{A \in S_{s_iw, \bfi}} \CU_{A, \bfi}^{rep}.
\]

Let $A \in S_{w, \bfi}$ be such that $1 \not\in A$, we claim that $\CU_{A, \bfi}^{rep} \subset \CU_{A' -\{1\}, \bfi}^{rep}$ for some $A'\in S_{w, \bfi}$ with $1   \in A'$. Note that $\CU_{A, \bfi}^{rep} \subset \CU_{A' -\{1\}, \bfi}^{rep}$ is equivalent to  $A' - \{1\} \subset A$. Assume $A = \{i_{j_1},i_{j_2}, \cdots, i_{j_l}\}$, such that $s_{i_{j_1}}s_{i_{j_2}} \cdots s_{i_{j_l}}$ is a reduced subexpression of $ww_0$ in $\bfi$. Since $s_{i_1} ww_0 < ww_0$, we have $s_{i_1}\cdot s_{i_{j_1}}s_{i_{j_2}} \cdots s_{i_{j_l}} = s_{i_{j_1}}s_{i_{j_2}} \cdots \widehat{s_{i_{j_a}}} \cdots s_{i_{j_l}}$ by the exchange property of Coxeter groups. Then $A' = \{i_1, i_{j_1}, \dots, \widehat{i_{j_a}}, \dots, i_{j_l}\}$ satisfies the desired properties. We now finishes the proof of the claim (a). 

(b) We claim $\CU_{s_iw, \bfi}^{rep}  = \bigcup_{A \in S_{s_iw, \bfi}} \CU_{A, \bfi}^{rep}$ for any $\bfi \in \CI$.

Let $\bfi$ and $\bfi'$ be two reduced expressions in $\CI$. Recall the transition map
\[
\BN^{\ell(w_0)} \xrightarrow{f_\bfi^{-1}} \CU_{\BN} \xrightarrow{f_{\bfi'}} \BN^{\ell(w_0)}.
\]
It suffices to show $f_{\bfi'} \circ f_\bfi^{-1}$ restricts to a bijection 
\[
\bigcup_{A \in S_{s_iw, \bfi}} \CU_{A, \bfi}^{rep} \leftrightarrow \bigcup_{A \in S_{s_iw, \bfi'}} \CU_{A, \bfi'}^{rep}.
\]    
    
Since any reduced expressions of $w_0$ are related by braid relations,it suffices to assume $\bfi$ and $\bfi'$ are related by a single braid move.   

If $\bfi = (\cdots, i,j, \cdots)$, $\bfi' = (\cdots, j,i, \cdots)$, then the map $f_{\bfi'} \circ f_\bfi^{-1}$ simply permutes coordinates. The bijection is trivial.
 
Assume $\bfi = (\cdots, i,j, i, \cdots)$, $\bfi' = (\cdots, j,i,j, \cdots)$. Then the map $f_{\bfi'} \circ f_\bfi^{-1}$ gives the following bijection (locally)

\begin{center}
    \begin{tabular}{ |c|c| } 
    \hline
    $\bfi=(\dots, i,j,i, \dots)$ & $\bfi' = (\dots, j,i,j, \dots)$ \\ 
    \hline
    $(\cdots, 0,0,0, \cdots )$ & $(\cdots, 0,0,0, \cdots)$ \\
    \hline
    $(\cdots, \BN,0,0, \cdots)$ & $(\cdots, 0,0,\BN, \cdots)$ \\
    \hline
    $(\cdots, \BN,\BN,0, \cdots) \cup (\cdots, 0,\BN,\BN,\cdots)$ & $(\cdots, \BN, 0, \BN,\cdots)$ \\
    \hline
    $(\cdots,\BN,\BN,\BN,\cdots)$ & $(\cdots, \BN,\BN,\BN, \cdots)$ \\
    \hline
    \end{tabular}
\end{center}
The claim (b) follows now.
\end{proof}

\subsection{Stratification by the semifield of $\{1\}$}
\label{sec:bases:U1strat}

We follow \cite{Lus19} to introduce another stratification of $\CU_{\BN}$. For any semifield $K$, let $\fU(K)$ be the monoid generated by $x_i(a)$ for $i\in I$, $a\in K$ subject to the following relations:

\begin{itemize}
    \item $x_i(a)x_i(b)=x_i(a+b)$;
    \item for $a_{ij}=0$, we have $x_i(a)x_j(b)=x_j(b)x_i(a)$;
    \item for $a_{ij}=-1$, we have $x_i(a)x_j(b)x_i(c)=x_j(bc/(a+c))x_i(a+c)x_j(ab/(a+c))$.
\end{itemize}
        
Let $\{1\}$ be the (unique) semifield of one element. Let $\fU(\{1\})$ be the monoid generated by $s_i=x_i(1)$ ($i\in I$) with multiplication denoted by $\ast$. Then $\fU(\{1\})$ is isomorphic to the monoid $(W, \ast)$, where $\ast$ denotes the Demazure product on $W$.

By \cite{Lus19}, there is a well-defined map $\CU_\BN\to \fU(\{1\})$ mapping $(\bfi,\bfa)\mapsto s_{j_1} \ast \cdots \ast s_{j_k}$, where $j_1<\cdots<j_k$ is the sequence of all $j\in \{1,\cdots,r\}$ such that $a_j=0$. 

We define $\chi:\CU_\BN\to \fU(\{1\}) \xrightarrow{\sim} W$ and 
\[
\CU_w^{\{1\}} = \{(\bfi,\bfa) \in \CU_\BN \,\vert\, \chi((\bfi,\bfa)) \ge ww_0 \text{ in the Bruhat order of $W$}. \}
\]

\begin{remark}
\label{U1twistremark}
Note that the stratification is twisted by $w_0$ from the original definition in \cite{Lus19}.
\end{remark}

By Proposition \ref{demcone} and the definitions of $S_{w,\bfi}$ and $\CU_{w, \{1\}}$, we obtain the following corollary.

\begin{cor}
\label{cor:rep_1_equal}
We have $\CU_w^{rep} = \CU_w^{\{1\}}$.
\end{cor}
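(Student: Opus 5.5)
The plan is to compare the two sets one reduced word at a time. Fix $\bfi\in\CI$ and $\bfa\in\BN^m$, and let $Z(\bfa)=\{j\mid a_j=0\}$. By Proposition~\ref{demcone}, $(\bfi,\bfa)\in\CU_w^{rep}$ if and only if $Z(\bfa)\supset A$ for some $A\in S_{w,\bfi}$. In words: some reduced subexpression of $ww_0$ in $\bfi$ uses only positions where $\bfa$ vanishes. By definition, $(\bfi,\bfa)\in\CU_w^{\{1\}}$ if and only if $\delta(Z(\bfa))\ge ww_0$. Here $\delta(Z)$ denotes the Demazure product $s_{i_{j_1}}\ast\cdots\ast s_{i_{j_k}}$ of the subword of $\bfi$ indexed by $Z=\{j_1<\cdots<j_k\}$. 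Both conditions are therefore statements about the subword $\bfi|_Z$, and the corollary reduces to the following combinatorial fact about an arbitrary word $\mathbf{j}$ in $I$.

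\textbf{Key lemma.} Let $P(\mathbf{j})$ be the set of products $s'_{j_1}\cdots s'_{j_k}$ over all subexpressions of $\mathbf{j}$. Then $P(\mathbf{j})=\{u\in W\mid u\le\delta(\mathbf{j})\}$. Moreover, every $u\in P(\mathbf{j})$ has a reduced subexpression in $\mathbf{j}$.

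I would prove the lemma by induction on the length of $\mathbf{j}$, writing $\mathbf{j}=\mathbf{j}'s$ with $s=s_i$ for some $i\in I$. Then
\[
P(\mathbf{j})=P(\mathbf{j}')\cup P(\mathbf{j}')s,
\qquad
\delta(\mathbf{j})=\max(\delta(\mathbf{j}'),\delta(\mathbf{j}')s).
\]
By induction $P(\mathbf{j}')=[e,\delta']$ with $\delta'=\delta(\mathbf{j}')$. It then remains to check that $[e,\delta']\cup[e,\delta']s=[e,\max(\delta',\delta's)]$, which is the standard consequence of the lifting property (the ``Z-property'') of Bruhat order. For the second assertion, take any expression of $u$ as a subword and delete letters by the deletion property; this yields a reduced subword of it.

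Given the lemma, the two inclusions follow directly, applied to $\mathbf{j}=\bfi|_{Z(\bfa)}$. If $\delta(Z(\bfa))\ge ww_0$, then $ww_0\in P(\bfi|_{Z(\bfa)})$. Hence $ww_0$ has a reduced subexpression supported in $Z(\bfa)$, and this is an element $A\in S_{w,\bfi}$ with $A\subset Z(\bfa)$. Conversely, such an $A$ exhibits $ww_0\in P(\bfi|_{Z(\bfa)})$, so $ww_0\le\delta(Z(\bfa))$. Since $\CU_w^{rep}$ and $\CU_w^{\{1\}}$ are subsets of $\CU_\BN$, and both descriptions have now been checked in the same chart $f_\bfi$, they agree.

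The main obstacle is the interval identity in the inductive step of the key lemma. I would handle it by cases. Suppose first that $\delta's>\delta'$. Given $u\le\delta's$, the lifting property gives $u\le\delta'$ or $us\le\delta'$. If instead $\delta's<\delta'$, the lifting property shows that $[e,\delta']$ is stable under right multiplication by $s$. In both cases the reverse inclusion holds because $x\le y$ implies $xs\le\max(y,ys)$.
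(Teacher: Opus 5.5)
Your proposal is correct and follows the paper's route. The paper deduces the corollary in one line from Proposition~\ref{demcone} and the definitions. Your key lemma (the subword products of a word form the lower Bruhat interval below its Demazure product, and each is realized by a reduced subword) is exactly the standard fact that one-line deduction leaves implicit, and your lifting-property and deletion-property arguments for it are sound.
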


\subsection{Stratification by total positivity}
\label{sec:bases:TP}

We follow \cite{Lus21} to introduce a stratification of $\CU_{\BN}$ using totally nonnegative flag varieties. Consider the specialization of the integral form of $L(\lambda)$ at $q=1$ and extend the scalar to $\BR$,  we obtain a simple $G$-module and $\fg$-module. We will abuse notation and still denote it by $L(\lambda)$. We also get the canonical basis $\bfB(\lambda)$ of $L(\lambda)$ after specialization. The set $\bfB(\lambda)$ is still identified with a subset of $\bfB$ and equivalently $\CU_\BN$.

Let $\lambda \in X^+$ and $\eta_\lambda$ be the highest weight vector in $L(\lambda)$. The line $[\eta_\lambda]\in \BP(L(\lambda))$ is fixed by $B^+$, so we get a map $\Psi:\CB\to \BP(L(\lambda))$ given by $xB^+\mapsto [x\eta_\lambda]$.
        
Let $v \le w \in W$ and $x \in \CB_{v,w,>0}$. We write $\Psi(x) = [\sum_{b \in \bfB(\lambda)}c_b(x) b]$. We define 
\[
\bfB_{v,w}^{tp}(\lambda) = \{ b \in \bfB(\lambda) \,\vert\, c_b(x) \neq 0\}.
\]

\begin{lem}
\label{lem:tp_indep}
The definition $\bfB_{v,w}^{tp}(\lambda)$ is independent of the choice of $x \in \CB_{v,w,>0}$.     
\end{lem}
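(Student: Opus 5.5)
Using Theorem~\ref{MRparametrization}, I will write every point of $\CB_{v,w,>0}$ as $x = g_1\cdots g_n B^+$. Here $\bfw=(i_1,\dots,i_n)$ is a fixed reduced expression for $w$ and $\bfv$ is the right-most subexpression for $v$. We have $g_k=\dot{s}_{i_k}$ for $k\in J^+$ and $g_k=y_{i_k}(t_k)$ with $t_k\in\BR_{>0}$ for $k\in J^\circ$. Then
\[
x\eta_\lambda = g_1\cdots g_n\,\eta_\lambda,
\]
and I will expand it in the canonical basis $\bfB(\lambda)$ of the specialized module. The claim is that each coefficient $c_b(x)$ is, up to an overall sign depending only on $b$ (and the fixed lift), a polynomial in the $t_k$ with nonnegative coefficients. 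The support of such a polynomial evaluated at a positive tuple is independent of the tuple: the value is nonzero if and only if the polynomial is not identically zero. This gives the required independence of $x$. The projective ambiguity in $\Psi(x)$ is harmless, since rescaling does not change which coefficients vanish.

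The key input is Lusztig's positivity for the canonical basis at $q=1$. First, $y_i(t)=\sum_{a\ge0}t^aF_i^{(a)}$ acts on $L(\lambda)$ by a matrix with entries in $\BN[t]$ in the basis $\bfB(\lambda)$, by positivity of the structure constants of the divided powers $F_i^{(a)}$ on canonical bases (Lusztig \cite{Lus93}, simply-laced case). Second, $\dot{s}_i$ maps each canonical basis element to $\pm$ a canonical basis element. This is Lusztig's result that the braid group action at $q=1$ permutes the canonical basis up to sign, and the explicit form of $\dot s_i$ reflects this.

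The step I expect to be the main obstacle is the sign bookkeeping. In a product containing both $\dot s$'s and $y$'s, different paths through the expansion could a priori contribute to the same $b$ with opposite signs, so cancellation must be ruled out. My first attempt is to avoid the issue entirely. The monomial $y_{i_1}(t_1)\cdots y_{i_r}(t_r)$ and the element $\dot{s}_{i_k}$ each produce coefficients that are uniformly signed for each target $b$, and I will check that composing them keeps the sign dependent only on the target. Concretely, I will show the sign of $\dot s_i$ on a canonical basis element depends only on its weight, and weights are tracked consistently through the product.

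If that fails, I will use a fallback. By \cite{Lus94a}, $\CB_{\ge0}$ is the closure of $U^-_{\ge0}B^+/B^+$, and points of $\CB_{v,w,>0}$ can be written as $u\dot{v}B^+$-type representatives. Alternatively, I can take limits of the positive expressions $y_{i_1}(t_1)\cdots y_{i_n}(t_n)\eta_\lambda$, whose coefficients all lie in $\BN[t]$. The span of these is controlled, and Zariski-irreducibility of $\CB_{v,w,>0}$'s closure then lets me deduce that the vanishing locus of each $c_b$ is either everything or a proper subvariety. Combining this with nonnegativity of the coefficients along the positive parametrization excludes vanishing at isolated positive points.
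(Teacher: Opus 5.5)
\textbf{Verdict: genuine gap.} Your overall strategy is the paper's own. The paper parametrizes $\CB_{v,w,>0}$ by Theorem~\ref{MRparametrization} and then simply asserts that each $c_b$ is, up to a scalar, a subtraction-free expression in the $t_k$. Your first input is correct: $y_i(t)$ acts on $\bfB(\lambda)$ by a matrix with entries in $\BN[t]$.

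\textbf{The second input is false.} $\dot{s}_i$ does not map canonical basis elements to $\pm$ canonical basis elements. Take the adjoint representation $L(\omega_1+\omega_2)$ of $\mathfrak{sl}_3$ on trace-zero matrices, with $\eta_\lambda=E_{13}$, $F_1=\operatorname{ad}E_{21}$ and $F_2=\operatorname{ad}E_{32}$. The weight-zero part of $\bfB(\lambda)$ is
\[
\{F_1F_2\eta_\lambda,\,F_2F_1\eta_\lambda\}=\{E_{11}-E_{22},\,E_{33}-E_{22}\}.
\]
Whatever sign convention is used, $\dot{s}_1$ acts on diagonal matrices by swapping the first two entries. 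Hence
\[
\dot{s}_1(F_2F_1\eta_\lambda)=E_{33}-E_{11}=F_2F_1\eta_\lambda-F_1F_2\eta_\lambda .
\]
The underlying reason is that $\bfB(\lambda)$ is adapted only to the $\mathfrak{sl}_2$-isotypic filtration, not to the decomposition: $F_2F_1\eta_\lambda$ is a one-element $1$-string, yet $E_1F_2F_1\eta_\lambda=F_2\eta_\lambda\neq 0$.

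So the cancellation you flagged is real, and your weight-based sign bookkeeping has nothing to stand on. Even if $\dot s_i$ were a signed permutation, a later $y_j$ mixes weights, so weight-dependent signs do not propagate. Already in $SL_2$, $y(t')\,\dot{s}\,y(t)\eta$ has $F\eta$-coefficient $\pm(1-tt')$. Any correct argument must therefore use that $\bfv$ is the right-most (positive distinguished) subexpression, and your proposal never does.

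\textbf{The fallback does not close the gap.} Total nonnegativity of $\CB_{\ge 0}$ only gives that, after a common rescaling, $c_b(x)\ge 0$ at every point. A polynomial that is nonnegative on $\BR_{>0}^{d}$ can still vanish on a hypersurface meeting that orthant, e.g.\ $(t_1-t_2)^2$. Irreducibility only says the zero set is proper, not that it misses the positive orthant. What you need is nonnegativity of the polynomial's \emph{coefficients}, which is exactly the point left unproved.

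\textbf{A route that avoids the $\dot{s}$'s} (at least for regular $\lambda$) is to run the induction of the proof of Theorem~\ref{thm:intersection} pointwise, using Proposition~\ref{MultiplyBijection}.
\begin{enumerate}
\item In the $Y_i$-step, write $x=y_i(a)x'$. No cancellation occurs, because all canonical coordinates of $x'\in\CB_{\ge 0}$ have one sign ($\CB_{\ge 0}$ is the closure of $U^-_{\ge 0}B^+/B^+$) and $F_i^{(k)}$ has nonnegative structure constants.
\item In the $X_i$-step, $x_i(a)x$ lies in $\CB_{s_iv,w,>0}$, and Lemma~\ref{LowestWtCoeffZero} gives the needed nonvanishing.
\end{enumerate}
This shows that the support of $\Psi(x)$ equals $\bfB_w(\lambda)\cap\bfB^v(\lambda)$ for every $x\in\CB_{v,w,>0}$, which is stronger than the lemma.
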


\begin{proof}
By the Marsh-Rietsch parametrization of $\CB_{v,w,>0}$, $x=g_1\cdots g_nB^+$, where $g_k=\dot{s}_{i_k}$ if $k\in J^+$ and $g=y_{i_k}(t_k)$ if $k\in J^\circ$. Then up to multiplication by $\BR^\x$, the coefficient $c_b(x)$ is a subtraction-free expressions of $t_k$. Therefore $c_b(x) \neq 0$ for some $x \in \CB_{v,w,>0}$ if and only if $c_b(x) \neq 0$ for all $x \in \CB_{v,w,>0}$.
\end{proof}

Let $\CU_{v,w}^{\lambda, tp}$ be the set in $\CU_\BN$ corresponding to $\bfB_{v,w}^{tp}(\lambda)$. We write $\CU_{w, \lambda}^{tp} =\CU_{e,w, \lambda}^{tp}$ and $\CU_w^{tp} = \cup_{\lambda \in X^+}\CU_{w, \lambda}^{tp}$. This union is best viewed as a limit as $\lambda\to \infty$: if $\lambda-\mu\in X^+$, then $\CU_{w, \mu}^{tp} \subset \CU_{w, \lambda}^{tp}$. This is parallel to the fact that $\bfB$ is a union of $\bfB(\lambda)$ for $\lambda\in X^+$.

\begin{prop}
\label{tp=rep}
We have $\CU_w^{tp} = \CU_w^{rep}$.
\end{prop}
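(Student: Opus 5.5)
Fix $\lambda\in X^+$. The plan is to prove $\CU_{w,\lambda}^{tp}=\CU_w^{rep}\cap\CU_{\BN,\lambda}$, i.e. $\bfB_{e,w}^{tp}(\lambda)=\bfB_w^{rep}(\lambda)$, and then take the union over $\lambda$. Every element of $\bfB_w^{rep}$ survives in $\bfB(\lambda)$ for $\lambda$ large, because $\bfB=\bigcup_\lambda\bfB(\lambda)$. So the union of the right-hand sides is $\CU_w^{rep}$.

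For $v=e$ the rightmost subexpression of $e$ in a reduced expression $\bfw=(i_1,\dots,i_n)$ of $w$ is empty. Theorem~\ref{MRparametrization} therefore gives $x=y_{i_1}(t_1)\cdots y_{i_n}(t_n)B^+$ with all $t_k>0$. Expanding, we get
\[
x\eta_\lambda=\sum_{a_1,\dots,a_n\ge0}\frac{t_1^{a_1}\cdots t_n^{a_n}}{a_1!\cdots a_n!}\,F_{i_1}^{a_1}\cdots F_{i_n}^{a_n}\eta_\lambda .
\]

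\textbf{Inclusion $\bfB_{e,w}^{tp}(\lambda)\subset\bfB_w^{rep}(\lambda)$.} Here I would use that each $F_{i_1}^{(a_1)}\cdots F_{i_n}^{(a_n)}\eta_\lambda$ lies in $L_w(\lambda)$. This holds because the span of these monomials over all $a_k$ is the Demazure module; this is the standard identification of $L_w(\lambda)$ with $\bfU^-_w\eta_\lambda$ used by Lusztig \cite{Lus94b} and Kashiwara \cite{Kas93}. Since $\bfB_w^{rep}(\lambda)$ is a basis of $L_w(\lambda)$ that is a subset of $\bfB(\lambda)$, every element of $L_w(\lambda)$ has coefficients supported on $\bfB_w^{rep}(\lambda)$, and the inclusion follows.

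\textbf{Reverse inclusion.} This is the main obstacle, because one must rule out cancellation among the monomials. The key input is Lusztig's positivity: at $q=1$, each $F_{i_1}^{(a_1)}\cdots F_{i_n}^{(a_n)}\eta_\lambda$ is a $\BN$-linear combination of canonical basis elements. Since all $t_k>0$, the coefficient $c_b(x)$ is a sum of nonnegative terms. Hence $c_b(x)\ne0$ iff $b$ occurs with nonzero coefficient in some monomial $F_{i_1}^{(a_1)}\cdots F_{i_n}^{(a_n)}\eta_\lambda$.

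It then suffices to show that every $b\in\bfB_w^{rep}(\lambda)$ appears in some such monomial. By Kashiwara's description, $b=\tilde f_{i_1}^{a_1}\cdots\tilde f_{i_n}^{a_n}\eta_\lambda$ for the fixed reduced word (his Proposition~3.2.5 works for any reduced word of $w$). I would argue by downward induction along the word, using the standard fact that $F_i^{(a)}b'$ contains $\tilde f_i^{a}b'$ with coefficient $1$ when $a=\varphi_i(b')$ is maximal, plus positivity. More precisely, I would choose the exponents so that each string is taken to its end; the top-$\ve_i$ component of $F_i^{(a)}$ applied to a positive combination then has nonzero coefficient on the desired crystal element. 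If this crystal argument proves too delicate, a cleaner alternative is to compare dimensions. The span of the vectors $x\eta_\lambda$, as the $t_k$ vary, equals the span of all the monomials, which is $L_w(\lambda)$. The support of a positive combination whose span is all of $L_w(\lambda)$ must then contain the whole basis $\bfB_w^{rep}(\lambda)$. This works because the support of $x\eta_\lambda$ is independent of $x$ (Lemma~\ref{lem:tp_indep}) and contains the supports of all the monomials. I expect to use this spanning argument as the main route, since it avoids crystal combinatorics.
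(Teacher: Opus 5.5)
Your proposal is correct, and it shares the paper's skeleton. That skeleton is: the Marsh--Rietsch point $y_{i_1}(t_1)\cdots y_{i_n}(t_n)B^+$; expanding the exponentials into the monomials $F_{i_1}^{(a_1)}\cdots F_{i_n}^{(a_n)}\eta_\lambda$ with positive weights; Lusztig positivity to rule out cancellation; and the Demazure containment of these monomials in $L_w(\lambda)$ (the paper's citation of Kashiwara's Corollary~3.2.2) for $\bfB_{e,w}^{tp}(\lambda)\subset\bfB_w^{rep}(\lambda)$.

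You differ from the paper in the inclusion $\bfB_w^{rep}(\lambda)\subset\bfB_{e,w}^{tp}(\lambda)$. The paper uses a leading-term statement from Kashiwara: for every exponent vector, $F_{i_1}^{(j_1)}\cdots F_{i_n}^{(j_n)}\eta_\lambda$ has a strictly positive coefficient on $G(\tilde f_{i_1}^{j_1}\cdots\tilde f_{i_n}^{j_n}\eta_\lambda)$. Each element of Kashiwara's description of $\bfB_w$ thus gets an explicit witness monomial. You replace this with linear algebra: the monomials span $L_w(\lambda)$, so the union of their supports must be all of $\bfB_w^{rep}(\lambda)$. Your version needs only the spanning half of the standard identification of $L_w(\lambda)$ with the span of these monomials, and no crystal computation. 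The paper's version is more explicit about which monomial detects which basis element, and it ties directly to the $\tilde f_i$-description of $\bfB_w^{rep}$.

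To make your route airtight with respect to specialization, do the support computation over $\BQ(q)$. The coefficients lie in $\BN[q,q^{-1}]$, so a coefficient is nonzero over $\BQ(q)$ if and only if its value at $q=1$ is positive. You then never need the monomials to still span after setting $q=1$. Lemma~\ref{lem:tp_indep} is also unnecessary for your argument.

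Your fallback crystal route is essentially the paper's argument, but you should drop the restriction $a=\varphi_i(b')$. That restriction reaches only ends of strings; for $w=s_i$ it misses $\tilde f_i^k\eta_\lambda$ with $0<k<\langle\lambda,\alpha_i^\vee\rangle$. Also, the coefficient of $G(\tilde f_i^a b')$ in $F_i^{(a)}G(b')$ is the quantum binomial $\begin{bmatrix}\ve_i(b')+a\\ a\end{bmatrix}$, which is not $1$ in general. What matters is that it is nonzero for every $a$ with $\tilde f_i^a b'\neq 0$.
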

    
\begin{proof}
It suffices to prove $\CU_{w, \lambda}^{tp} = \CU_{w, \lambda}^{rep}$ for any $\lambda \in X^+$. 
    
Let $\lambda \in X^+$ and $w = s_{i_1} \cdots s_{i_n}$ be a reduced expression of $w$. Then by the Marsh--Rietsch parametrization, we have $y_{i_1}(a_1) \cdots y_{i_n}(a_n) B^+ \in \CB_{e,w, >0}$ if $a_i >0$ for all $i$. Then by \cite{Kas93}*{Section 3}, we have
\begin{align*}
&y_{i_1}(a_1) \cdots y_{i_n}(a_n) \eta_{\lambda} = (\sum_{j=0}^{\infty} a_1^j F_{i_1}^{(j)}) \cdots (\sum_{j=0}^{\infty} a_n^j F_{i_n}^{(j)}) \eta_\lambda \\
&= \sum_{} a_1^{j_1} \cdots a_n^{j_n} c_{j_1, \dots, j_n}G(\tilde{f}_{i_1}^{j_1} \cdots \tilde{f}_{i_n}^{j_n} \eta_\lambda) + \sum_{b \in B(\lambda)} c_b b, \quad \text{for } c_{j_1, \dots, j_n} >0, c_b \ge 0.
\end{align*}
It follows that $\CU_{w, \lambda}^{tp} \supset \CU_{w, \lambda}^{rep}$. The reverse inclusion follows from \cite{Kas93}*{Corollary~3.2.2}.
\end{proof}

Combining with Corollary~\ref{cor:rep_1_equal}, this proves Lusztig's conjecture in \cite{Lus21}*{Section 2.4}. Conceptually, we use Kashiwara's Demazure module partition as a bridge to prove the equality of the two Lusztig's partitions.

For the remaining of the paper, we will drop the superscripts $rep$, $tp$ and $\{1\}$ for $\bfB_w$ and $\CU_w$.

\section{Compatibility of Richardson varieties}
\label{sec:richardson}

In this section, we show that the $\bfB_{v,w}^{tp}(\lambda)$ defined in Section \ref{sec:bases:TP} is an intersection of the canonical bases of the corresponding Demazure and opposite Demazure modules. This parallels the geometric definition of Richardson varieties as intersections of Schubert varieties and opposite Schubert varieties.

\subsection{Maps between totally positive parts}
\label{sec:richardson:Deodhar}

We will use following proposition repeatedly. Its proof is immediate from Theorem \ref{MRparametrization}.

\begin{prop}
\label{MultiplyBijection}
If $w<s_iw$, then $Y_i:\BR_{>0}\x \CB_{v,w,>0}\to \CB_{v,s_iw,>0}$ given by $(a,gB^+)\mapsto (y_i(a)gB^+)$ is a bijection.

Similarly, if $s_iv<v$, then $X_i:\BR_{>0}\x \CB_{v,w,>0}\to \CB_{s_iv,w,>0}$ given by $(a,gB^+)\mapsto (x_i(a)gB^+)$ is a bijection.
\end{prop}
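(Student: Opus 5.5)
We derive both statements from the Marsh--Rietsch parametrization (Theorem~\ref{MRparametrization}). The key step is to pick the reduced expression so that the new simple reflection sits at the very beginning of the word.

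For the first statement, assume $w<s_iw$. Fix a reduced expression $\bfw=(i_1,\dots,i_n)$ for $w$. Then $\bfw'=(i,i_1,\dots,i_n)$ is a reduced expression for $s_iw$. Let $\bfv$ be the right-most subexpression for $v$ in $\bfw$. The recursive definition builds the right-most subexpression from the right end of the word. Running it on $\bfw'$, the first $n$ steps (from the right) are identical to those for $\bfw$, since they only involve the letters $i_n,\dots,i_1$. After these steps the remaining element is $e$. At the last position we have $e\cdot s_i>e$, so that position is assigned $e$. Hence the right-most subexpression for $v$ in $\bfw'$ is $(e,\bfv)$. Its set $J^\circ$ is $\{0\}\sqcup J^\circ(\bfv)$, where $0$ indexes the new first letter, and $J^+$ is unchanged.

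Theorem~\ref{MRparametrization} then says that $\BR_{>0}\x\BR_{>0}^{|J^\circ(\bfv)|}\to\CB_{v,s_iw,>0}$, $(a,t)\mapsto y_i(a)g_1\cdots g_nB^+$, is a bijection. The same theorem says that $t\mapsto g_1\cdots g_nB^+$ is a bijection $\BR_{>0}^{|J^\circ(\bfv)|}\to\CB_{v,w,>0}$. Composing the first map with the inverse of the second gives exactly $Y_i$, so $Y_i$ is a bijection.

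For the second statement, assume $s_iv<v$. The expected route is to use a reduced expression of $w$ whose right-most subexpression for $v$ begins with $\dot s_i$ in position one, that is, $1\in J^+$ with $i_1=i$. Then $x_i(a)\dot s_i$ should be rewritten as $\dot s_i\,y_i(a')$ times an element of $B^+$ moved to the right. This rests on the $SL_2$ identity $x_i(a)\dot s_i=y_i(a^{-1})\,\alpha_i^\vee(\cdot)\,x_i(\cdot)$, or the analogous expression for $\dot s_i y_i(\cdot)$. The result would then relate the parametrization of $\CB_{s_iv,w,>0}$ by $(e,\dots)$ to that of $\CB_{v,w,>0}$ by $(s_i,\dots)$.

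This is the main obstacle. We need a reduced expression of $w$ for which the right-most subexpression of $v$ uses the first letter $i$, and we need the torus and unipotent factors to pass through the remaining product while preserving positivity. Such a word need not exist when $s_iw>w$. So the first thing to try is a symmetry argument instead. Use the automorphism of $G$ that swaps $x_i$ and $y_i$ and inverts $T$, composed with $w_0$-translation. This automorphism maps $\CB_{\geq0}$ to itself and sends $\CB_{v,w,>0}$ to $\CB_{ww_0,vw_0,>0}$, which exchanges the roles of $x_i$ and $y_i$. Under this symmetry the second statement becomes an instance of the first, with $w$ replaced by $vw_0$. The condition $s_iv<v$ becomes $vw_0<s_ivw_0$, so the hypothesis of the first statement holds and the first statement applies.
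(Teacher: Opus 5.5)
Your proposal is correct and is essentially the paper's own proof. For $Y_i$ you prepend $i$ to a reduced word of $w$, check that the right-most subexpression becomes $(e,\bfv)$, and read off the bijection from Theorem~\ref{MRparametrization}. For $X_i$ you use the same automorphism swapping $x_i$ and $y_i$, which sends $\CB_{v,w,>0}$ to $\CB_{ww_0,vw_0,>0}$, to reduce to the first statement for the pair $(ww_0,vw_0)$; you also usefully make explicit the $\dot w_0$-translation that the paper leaves implicit. The exploratory paragraph about a direct $SL_2$ rewriting of $x_i(a)\dot s_i$ plays no role in the final argument and can be dropped.
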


\begin{proof}
Fix a reduced expression $\bfw$ for $w$, giving the positive subexpression $\bfv$ for $v$ in $\bfw$. Then if $s_iw>w$, $\bfw'=(e,s_i,s_iw_{(1)},\cdots,s_iw_{(n)})$ is a reduced expression for $s_iw$, and $(e,e,v_{(1)},\cdots,v_{(n)})$ is the positive subexpression for $v$ in $\bfw'$. The desired bijection follows immediately after writing down the parametrizations explicitly.

For $X_i$, we use the automorphism $\phi$ of $G$ such that $\phi(x_i(a))=y_i(a)$, $\phi(y_i(a))=x_i(a)$ and $\phi(t)=t^{-1}$ for $t\in T$. Then $\phi$ preserves $\CB_{\geq0}$ and gives an isomorphism between $\CB_{v,w}$ and $\CB_{ww_0,vw_0}$. Thus $\phi$ gives a bijection between $\CB_{v,w,>0}$ and $\CB_{ww_0,vw_0,>0}$, so $X_i$ is a bijection by combining $\phi$ with $Y_i$.
\end{proof}

\subsection{Richardson stratification}
\label{sec:richardson:strat}

Recall the subset $\bfB_{v,w}^{tp}(\lambda)$ defined in Section \ref{sec:bases:TP} using total positivity. By the results in Section \ref{sec:bases}, $\bfB_{e,w}^{tp}(\lambda)=\bfB_w(\lambda)$.

The following is the main theorem of this section.

\begin{thm}
\label{thm:intersection}
Assume $\lambda$ is regular. For any $v,w\in W$ with $v\leq w$, we have
\[
\bfB_{v,w}^{tp}(\lambda)=\bfB_{e,w}^{tp}(\lambda)\cap \bfB_{v,w_0}^{tp}(\lambda).
\]
\end{thm}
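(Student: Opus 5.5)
We argue by induction on $\ell(v)$, with $w$ fixed, moving from $\CB_{e,w,>0}$ down to $\CB_{v,w,>0}$ through the maps $X_i$ of Proposition~\ref{MultiplyBijection}. Set $\bfB^{v}(\lambda)=\bfB_{v,w_0}^{tp}(\lambda)$. This set should be the canonical basis of the opposite Demazure module $\bfU^-\eta_{v\lambda}$. Indeed, applying the automorphism $\phi$ from the proof of Proposition~\ref{MultiplyBijection} together with the symmetric argument of Proposition~\ref{tp=rep}, lowest weight version, identifies $\bfB_{v,w_0}^{tp}(\lambda)$ with $\bfB(\lambda)\cap \bfU^-\eta_{v\lambda}$. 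In crystal terms this is the set obtained from $\eta_{w_0\lambda}$ by the $\tilde e$-operators along reduced words of $w_0 v^{-1}$, i.e.\ the opposite Demazure crystal. We will use this description throughout.

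\textbf{The inclusion $\subseteq$.} Take $x\in\CB_{v,w,>0}$. It lies in the closure of $\CB_{e,w,>0}$ and also in $\CB_{\ge0}\cap \overline{\mathring\CB^{v}}$. We realize $x$ as a limit of points $x(t)\in\CB_{e,w,>0}$ whose coefficients $c_b(x(t))$ are nonnegative; positivity of the canonical basis makes all coefficients nonnegative on $\CB_{\ge0}$. Nonnegativity forbids cancellation in the limit, so the support of $x$ is contained in the support on $\CB_{e,w,>0}$, giving $\bfB^{tp}_{v,w}(\lambda)\subseteq\bfB_{e,w}^{tp}(\lambda)$. The same argument with $\CB_{v,w_0,>0}$ gives the other containment. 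More concretely, via Marsh--Rietsch, $x$ is a limit of elements $g_1\cdots g_n B^+$ in which some $y_{i_k}(t_k)$ are replaced by $\dot s_{i_k}$; the latter arise as limits of suitably rescaled $y_{i_k}(t)$ modulo $B^+$. Alternatively, $x\eta_\lambda$ lies in $L_w(\lambda)\cap \bfU^-\eta_{v\lambda}$ up to scalar, and both spaces are spanned by canonical basis subsets by Kashiwara, so the support lies in the intersection. This second version is cleaner and avoids limits.

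\textbf{The inclusion $\supseteq$, by induction on $\ell(v)$.} The case $v=e$ is trivial. For $s_iv<v$ with $s_iv\le w$, Proposition~\ref{MultiplyBijection} gives
\[
\CB_{s_iv,w,>0}=\{x_i(a)x : a>0,\ x\in\CB_{v,w,>0}\}.
\]
Hence
\[
x_i(a)x\eta_\lambda=\sum_n a^n E_i^{(n)}x\eta_\lambda .
\]
We need positivity of the structure constants of $E_i^{(n)}$ on canonical basis elements, which holds in simply-laced type. With it, the support of $x_i(a)x$ equals the union over $n$ of the supports of $E_i^{(n)}b$ for $b$ in the support of $x$. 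We then have to show this union is exactly
\[
\bfB_w(\lambda)\cap\bfB^{s_iv}(\lambda).
\]
Here we use that $\bfB_w(\lambda)$ is stable under $E_i$, being a $\bfU^+$-module basis, and that string properties of the crystal let us generate every element of $\bfB^{s_iv}(\lambda)\cap\bfB_w(\lambda)$ from $\bfB^{v}(\lambda)\cap\bfB_w(\lambda)$ by the top term $\tilde e_i^{\max}$. The top term $E_i^{(\varphi)}b$ has leading coefficient $\tilde e_i^{\varphi}b$, and positivity prevents it from cancelling.

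\textbf{Main obstacle.} The hard step is the crystal combinatorics: showing that every $b'\in\bfB_w(\lambda)\cap\bfB^{s_iv}(\lambda)$ occurs as a term of $E_i^{(n)}b$ for some $b\in\bfB_w(\lambda)\cap\bfB^{v}(\lambda)$. Two facts are needed. First, the $i$-string through $b'$ stays inside $\bfB_w(\lambda)$, which is Kashiwara's string property for Demazure crystals. Second, the bottom of that string lies in $\bfB^{v}(\lambda)$, where $\bfB^{v}(\lambda)=\bigcup_k\tilde f_i^k\bfB^{s_iv}(\lambda)$ in the opposite sense. Regularity of $\lambda$ enters here, to guarantee $v\lambda\ne s_iv\lambda$ so that these strings are nondegenerate and the Demazure crystals for distinct Weyl elements are distinct. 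We would first try to verify these two facts via Kashiwara's string property, reading them off from the cone description of Proposition~\ref{demcone} using a reduced word $\bfi$ starting with $i$.
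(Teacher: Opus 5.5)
Your inclusion $\subseteq$ is fine. The argument via $x\eta_\lambda\in L_w(\lambda)\cap\bfU^-\eta_{v\lambda}$ is a valid and cleaner alternative to the paper's closure argument.

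The gap is in $\supseteq$: your inductive step runs in the wrong direction. Since $s_iv<v$, Proposition~\ref{MultiplyBijection} gives $\CB_{s_iv,w,>0}=x_i(\BR_{>0})\,\CB_{v,w,>0}$. So expanding $x_i(a)x\eta_\lambda$ computes the support at the \emph{shorter} element $s_iv$ from the support at $v$. In an induction on $\ell(v)$ starting at $v=e$, what you know is $\bfB^{tp}_{s_iv,w}(\lambda)$ and what you need is $\bfB^{tp}_{v,w}(\lambda)$. Your ``main obstacle'' statement (every $b'\in\bfB_w(\lambda)\cap\bfB^{s_iv}(\lambda)$ occurs in some $E_i^{(n)}b$ with $b\in\bfB_w(\lambda)\cap\bfB^{v}(\lambda)$) would deduce the $s_iv$ case from the $v$ case, which is of no use here.

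Reversing to a downward induction from $v=w$ with $w$ fixed does not repair this. Repeated left multiplication only reaches $v$ with $\ell(wv^{-1})+\ell(v)=\ell(w)$; for example, $v=s_1\le w=s_1s_2$ in type $A_2$ is never reached. Also, your claim that the $i$-string through $b'$ stays in $\bfB_w(\lambda)$ needs $s_iw<w$, and choosing $i$ with $s_iv<v$ does not give this. For $v=s_2$, $w=s_1s_2$, $i=2$, the $2$-string through $\eta_{w\lambda}$ meets $\bfB_w(\lambda)$ only in its top element.

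What is missing is a way to transport support information backwards through $x_i(a)$. This only works for elements that are lowest in their $i$-string. By Lemma~\ref{LowestWtCoeffZero}, $E_i(\cdot)$ has zero coefficient on such $b$, so $b\notin\bfB^{tp}_{v,w}(\lambda)$ would force $b\notin\bfB^{tp}_{s_iv,w}(\lambda)=\bfB_w(\lambda)\cap\bfB^{s_iv}(\lambda)$, a contradiction. For $b$ higher in its string, absence at $(v,w)$ does not propagate, because $b$ can be produced by $E_i^{(n)}$ from lower elements.

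The paper handles these elements by instead choosing $i$ with $s_iw<w$, so the whole string $S\ni b$ lies in $\bfB_w(\lambda)$. There are two cases:
\begin{itemize}
\item If $S\subset\bfB^v(\lambda)$, its top $b'$ lies in $\bfB_{s_iw}(\lambda)\cap\bfB^v(\lambda)$, so $v\le s_iw$ by Proposition~\ref{CrystalIntersection}. The induction hypothesis at $(v,s_iw)$ gives $b'\in\bfB^{tp}_{v,s_iw}(\lambda)$, and the map $Y_i$ together with positivity yields all of $S$.
\item If $S\cap\bfB^v(\lambda)=\{b\}$ with $|S|\ge 2$, Proposition~\ref{DemazureString} forces $s_iv<v$, and the lowest-weight argument applies.
\end{itemize}
So the proof needs a simultaneous induction on $(v,w)$ using both $X_i$ and $Y_i$. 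With $w$ fixed and no use of $Y_i$, your plan has no mechanism for the non-lowest elements.
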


For any $w\in W$, the opposite Demazure module is defined to be the $U_q^-(\fg)$-submodule of $L(\lambda)$ generated by $\eta_{w\lambda}$. Let $\bfB^w(\lambda)$ be its canonical basis.

\begin{prop}
\label{OppositeDemazure}
For any $w\in W$, we have $\bfB^w(\lambda)=\bfB_{w,w_0}^{tp}(\lambda)$.
\end{prop}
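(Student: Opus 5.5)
We prove $\bfB^w(\lambda)=\bfB_{w,w_0}^{tp}(\lambda)$ by reducing to the case $v=e$, i.e. to Proposition~\ref{tp=rep}, by means of a symmetry that exchanges $U^+$ and $U^-$ and sends $\CB_{w,w_0,>0}$ to a totally positive cell of the form $\CB_{e,u,>0}$.

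\textbf{Step 1: a convenient point.} Let $w_0=w\cdot w^{-1}w_0$ with lengths adding up. Write $w_0=s_{j_1}\cdots s_{j_r}s_{i_1}\cdots s_{i_n}$, where $s_{j_1}\cdots s_{j_r}$ is a reduced expression of $w$ and $s_{i_1}\cdots s_{i_n}$ is a reduced expression of $w^{-1}w_0$. The right-most subexpression for $w$ in this reduced word takes exactly the first $r$ letters. Theorem~\ref{MRparametrization} then shows that every point of $\CB_{w,w_0,>0}$ has the form
\[
\dot w\, y_{i_1}(t_1)\cdots y_{i_n}(t_n)B^+, \qquad t_k>0 .
\]
By Lemma~\ref{lem:tp_indep} it suffices to compute the support of $\dot w\, y_{i_1}(t_1)\cdots y_{i_n}(t_n)\eta_\lambda$ in the canonical basis.

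\textbf{Step 2: the support from $y$'s.} By the computation in the proof of Proposition~\ref{tp=rep}, the vector $y_{i_1}(t_1)\cdots y_{i_n}(t_n)\eta_\lambda$ has nonnegative coefficients. Its support is $\bfB_{u}(\lambda)$ with $u=w^{-1}w_0$, which is the canonical basis of the Demazure module $L_u(\lambda)$.

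\textbf{Step 3: applying $\dot w$.} Write $\dot w=\dot w_0\,\dot u^{-1}$ up to sign/torus factors, using $w=w_0u^{-1}$. We need two facts about how the lifts act on the canonical basis.
\begin{enumerate}
\item[(i)] $\dot w_0$ maps $\bfB(\lambda)$ to $\pm\bfB(\lambda)$ (Lusztig's result on the action of $\dot w_0$, at $q=1$), and it exchanges Demazure modules $L_u(\lambda)$ with opposite Demazure modules $\bfU^-\eta_{w_0u\lambda}$.
\item[(ii)] Dealing with $\dot u^{-1}$ is less pleasant. It is cleaner to avoid the factorization altogether and instead use the involution $\phi$ from the proof of Proposition~\ref{MultiplyBijection}, composed with $w_0$.
\end{enumerate}

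So the cleaner route is to apply $\phi$ directly. It sends $\CB_{w,w_0,>0}$ to $\CB_{e,ww_0,>0}$. On representations, $\phi$ corresponds to twisting $L(\lambda)$ by the Chevalley involution, which identifies the twisted module with $L(-w_0\lambda)$. Under this identification, $\eta_\lambda$ goes to the lowest weight vector of $L(-w_0\lambda)$ and the canonical basis goes to the canonical basis; the latter is compatibility of $\bfB$ with the involution $\omega$, a standard property. Moreover, $\phi$ maps the opposite Demazure module $\bfU^-\eta_{w\lambda}$ to the Demazure module $\bfU^+\eta$ on the corresponding extremal vector, which is $L_{ww_0}(-w_0\lambda)$ after the appropriate indexing. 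Transporting the statement through $\phi$ therefore turns it into $\bfB_{ww_0}(-w_0\lambda)=\bfB^{tp}_{e,ww_0}(-w_0\lambda)$, which is Proposition~\ref{tp=rep}.

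\textbf{Main obstacle.} The hard step is the bookkeeping in this transport. One must check three things:
\begin{enumerate}
\item[(a)] that $\phi$ is compatible with the map $\Psi$ after replacing $\eta_\lambda$ by the correct extremal vector, since $\phi$ does not fix $B^+$ but sends it to $B^-$. This requires composing with $\dot w_0$, which moves $B^-$ back to $B^+$, and it is here that Lusztig's $\dot w_0$-stability of $\bfB(\lambda)$ up to sign is needed;
\item[(b)] that signs do not create cancellations, which holds because the support is defined without reference to signs;
\item[(c)] that the Weyl group indices match, i.e. that $w\mapsto ww_0$ is the right twist, consistent with the statement of Proposition~\ref{MultiplyBijection}.
\end{enumerate}
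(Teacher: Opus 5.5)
Your Step 1 is false, and you should delete it. The right-most subexpression is built from the right end, and it skips $i_n$ only when $ws_{i_n}>w$, which usually fails. Take type $A_2$ with $w=s_1$ and the word $(1,2,1)$, so that $w^{-1}w_0=s_2s_1$. There the right-most subexpression for $s_1$ is $(e,e,1)$, not $(1,e,e)$. Your proposed point also fails concretely. In $SL_3$ the matrix $\dot s_1y_2(a)y_1(b)$ has first column $(b,-1,ab)^T$ and leading principal minors $b,1,1$. So $\dot s_1y_2(a)y_1(b)B^+$ lies in $\mathring{\CB}_{e,w_0}$, and it is not even totally nonnegative. The correct Marsh--Rietsch form puts $\dot w$ on the right: $\CB_{w,w_0,>0}=\{y_{i_1}(t_1)\cdots y_{i_n}(t_n)\dot wB^+\}$, with $s_{i_1}\cdots s_{i_n}$ a reduced expression of $w_0w^{-1}$. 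Since you abandon the factorization route anyway, everything before the switch to $\phi$ can simply be dropped.

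The $\phi$-transport you end with is correct. Set $\lambda^*=-w_0\lambda$ and let $\iota\colon L(\lambda)\to L(\lambda^*)$ be the $\omega$-twisted isomorphism. Then $\iota(gv)=\phi(g)\iota(v)$ and $\iota(\eta_\lambda)\in\BR^\times\dot w_0\eta_{\lambda^*}$. Hence $\iota\circ\Psi_\lambda=\Psi_{\lambda^*}\circ\tilde\phi$ with $\tilde\phi(gB^+)=\phi(g)\dot w_0B^+$, and $\tilde\phi$ carries $\CB_{w,w_0,>0}$ into $\CB_{e,ww_0,>0}$. Also $\iota(\bfU^-\eta_{w\lambda})=\bfU^+\eta_{ww_0\lambda^*}$ because $-w\lambda=ww_0\lambda^*$, which settles your item (c). Given $\iota(\bfB(\lambda))=\pm\bfB(\lambda^*)$ (compatibility of the canonical basis with the lowest weight realization), Proposition~\ref{tp=rep} for $\lambda^*$ together with Lemma~\ref{lem:tp_indep} gives the claim. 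For item (a) you do not need $\dot w_0$-stability of all of $\bfB(\lambda)$. You only need that identity and the fact that extremal weight spaces are lines.

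The paper's proof has the same content, packaged differently. It uses the $X_i$ half of Proposition~\ref{MultiplyBijection}, which is itself proved via $\phi$. This writes every point of $\CB_{w,w_0,>0}$ as $x_{i_1}(a_1)\cdots x_{i_k}(a_k)\dot w_0B^+$ with $s_{i_1}\cdots s_{i_k}=ww_0$ reduced. The paper then reruns the positivity expansion of Proposition~\ref{tp=rep} with $E_i^{(j)}$ acting on the lowest weight vector $\dot w_0\eta_\lambda$. That rerun tacitly needs the lowest weight version of Kashiwara's Demazure results, i.e.\ exactly the $\omega$-symmetry of the canonical basis you invoke. So the paper mirrors the computation, while you transport the finished statement. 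Your version makes the symmetry input explicit and avoids redoing the expansion, at the cost of verifying the intertwining $\iota\circ\Psi_\lambda=\Psi_{\lambda^*}\circ\tilde\phi$.
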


\begin{proof}
Let $w=s_{i_1}\cdots s_{i_k}w_0$. By Proposition \ref{MultiplyBijection}, any $gB^+\in \CB_{w,w_0,>0}$ can be written as
\[
gB^+=x_{i_1}(a_1)\cdots x_{i_k}(a_k) \dot{w}_0B^+
\]
for $a_i\in \BR_{>0}$. The proof is then the same as that in Proposition \ref{tp=rep}.
\end{proof}

We then prove a few lemmas on the crystal structure of $\bfB(\lambda)$. The next two Propositions by Kashiwara are crucial.

\begin{prop}[\cite{Kas94}*{Proposition~3.3.5}]
\label{DemazureString}
For any $i$-string $S$ of $\bfB(\lambda)$, $\bfB_w(\lambda)\cap S$ is either empty, $S$, or the highest weight vector $\{b\}$. If $s_iw<w$, then $\bfB_w(\lambda)\cap S$ is either empty or $S$.

Similarly, $\bfB^v(\lambda)\cap S$ is either empty, $S$, or the lowest weight vector $\{b'\}$. If $s_iv>v$, then $\bfB^v(\lambda)\cap S$ is either empty or $S$.
\end{prop}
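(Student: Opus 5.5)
The plan is to argue by the global-basis (lattice) characterization of Demazure modules, as Kashiwara does, combined with triangularity of the divided-power actions on the canonical basis with respect to the $i$-string filtration. The basic inputs are: $L_w(\lambda)=\bfU^+\eta_{w\lambda}$ is spanned by $\{b\eta_\lambda : b\in\bfB_w(\lambda)\}$ (Kashiwara's compatibility, recalled in Section~\ref{sec:bases:repn}); $L_w(\lambda)$ is stable under every $E_i^{(n)}$; and, when $s_iw<w$, $\eta_{w\lambda}$ is an $i$-lowest weight vector, so $L_w(\lambda)$ is also stable under $F_i^{(n)}$ (it is a module over the $i$-th minimal parabolic subalgebra). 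I will prove the statement for $\bfB_w(\lambda)$ and deduce the one for $\bfB^v(\lambda)$ by applying the Chevalley-type involution exchanging $E_i\leftrightarrow F_i$ and $\eta_{w\lambda}\leftrightarrow\eta_{ww_0\lambda}$ (the analogue of $\phi$ in Proposition~\ref{MultiplyBijection}), which preserves the canonical basis of $L(\lambda)$ and turns highest elements of $i$-strings into lowest ones.

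First step: the \emph{triangularity lemma}. For $b\in\bfB(\lambda)$ with $\ve_i(b)=n$, I would show
\[
E_i^{(n)}\,b \in \tilde e_i^{\,n}b + \sum_{\ve_i(b')>0} \BZ[q,q^{-1}]\, b',
\qquad
F_i^{(k)}\,b_0 \in \tilde f_i^{\,k}b_0 + \sum_{\ve_i(b')>k}\BZ[q,q^{-1}]\, b'
\]
for $b_0$ with $\ve_i(b_0)=0$. This follows from Kashiwara's grand-loop estimates: the subspaces $\sum_{\ve_i(b')\ge c}\BQ(q)\,b'$ coincide with $\sum_{m\ge c}F_i^{(m)}L(\lambda)$, and the global basis is upper-triangular relative to this filtration with leading term given by the crystal operators. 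This is the step I expect to be the main obstacle, since it requires the full machinery of the global basis, not just crystal combinatorics; I would quote it from \cite{Kas93} rather than reprove it.

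Second step: take an $i$-string $S$ with $S\cap\bfB_w(\lambda)\ni b$, $\ve_i(b)=n$. Since $L_w(\lambda)$ is $E_i$-stable, $E_i^{(n)}b\in L_w(\lambda)$; expanding in the basis $\bfB_w(\lambda)$ and using the triangularity lemma (the coefficient of $\tilde e_i^n b$ is $1$), we get that the highest element $b_0=\tilde e_i^n b$ lies in $\bfB_w(\lambda)$. Next, if some non-highest element $b$ of $S$ lies in $\bfB_w(\lambda)$, I want all of $S$. Here I would use the recursion $\bfB_{s_jw}=\bigcup_k\tilde f_j^k\bfB_w$ from Section~\ref{sec:bases:repn} and induct on $\ell(w)$: writing $w=s_jw'$ with $w'<w$, either $j=i$, in which case $\bfB_w(\lambda)$ is $\tilde f_i$-closed by construction and the string property is immediate, or $j\ne i$, in which case one uses that the $i$- and $j$-string structures on the rank-two subcrystal are known explicitly (tensor products of $\mathfrak{sl}_2$ or $\mathfrak{sl}_3$ crystals), so applying $\tilde f_j^k$ to a set with the trichotomy property for $i$ preserves it. This rank-two verification is a finite check that I would do via the tensor product rule.

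Third step: if $s_iw<w$, the $F_i$-stability of $L_w(\lambda)$ together with the second triangularity formula applied to $b_0\in\bfB_w(\lambda)$ shows inductively that $\tilde f_i^kb_0\in\bfB_w(\lambda)$ for all $k$ with $\tilde f_i^kb_0\neq0$, so the case $\{b_0\}$ cannot occur and $S\cap\bfB_w(\lambda)$ is $\emptyset$ or $S$. Alternatively, $s_iw<w$ gives $w=s_iw'$ with $w'<w$, and the recursion $\bfB_w=\bigcup_k\tilde f_i^k\bfB_{w'}$ together with the first part for $w'$ gives the same conclusion directly.
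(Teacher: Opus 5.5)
Your first and third steps are sound, and so is reducing the $\bfB^v(\lambda)$ statement to the $\bfB_w(\lambda)$ one by the Chevalley involution. One correction: the coefficient of $\tilde e_i^{\,n}b$ in $E_i^{(n)}b$ is a quantum binomial coefficient, not $1$. Already in $V(l)$ one has $E^{(n)}F^{(n)}v_0=\left[\begin{smallmatrix} l\\ n\end{smallmatrix}\right]v_0$. It is nonzero, so your conclusion that the top of $S$ lies in $\bfB_w(\lambda)$ stands.

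\textbf{The gap.} It is in the second step, in the case that carries the content of the proposition, namely $s_iw>w$. There every factorization $w=s_jw'$ with $w'<w$ has $j\neq i$. You then need the claim that $\bigcup_k\tilde f_j^{\,k}$ preserves the $i$-trichotomy, checked on rank-two crystals. For $a_{ij}=0$ this is true, since $\tilde f_j$ then carries $i$-strings onto $i$-strings. For $a_{ij}=-1$ it is false, even for sets that also satisfy the $j$-trichotomy and are stable under all $\tilde e$'s. Take $\mathfrak{sl}_3$, $\lambda=\alpha_1+\alpha_2$ with highest element $u$, $i=1$, $j=2$, and
\[
B'=\{u,\ \tilde f_2u,\ \tilde f_1\tilde f_2u,\ \tilde f_1^2\tilde f_2u\}.
\]
Every $1$-string and every $2$-string meets $B'$ in $\emptyset$, its top, or all of it. However, $\bigcup_k\tilde f_2^{\,k}B'=B'\cup\{b_{\mathrm{low}}\}$, where $b_{\mathrm{low}}=\tilde f_2\tilde f_1^2\tilde f_2u$ is the lowest element. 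The $1$-string $\{\tilde f_2^2\tilde f_1u,\,b_{\mathrm{low}}\}$ meets this set only in its bottom. The true Demazure crystal $\bfB_{s_1s_2}(\lambda)=B'\cup\{\tilde f_1u\}$ escapes this only because it contains $\tilde f_1u$, which trichotomy does not record.

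So the induction needs a finer description of $\bfB_{w'}(\lambda)$ than its string properties. The trichotomy is exactly what turns Kashiwara's crystal recursion into the Demazure character formula, so it cannot be a finite local check. The paper itself only cites Kashiwara, but its Proposition~\ref{demcone} supplies such a description. That proof uses only the recursion $\CU_{s_iw}^{rep}=\bigcup_k\tilde f_i^k\CU_w^{rep}$, which you also invoke.

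\textbf{How to close it.} Choose $\bfi\in\CI$ with $i_1=i$. Then the $i$-strings of $\bfB$ are the rays $\{(k,a_2,\dots,a_m):k\in\BN\}$, and there are three cases:
\begin{itemize}
\item The ray lies entirely in $\CU_{w,\bfi}^{rep}$ if some $A\in S_{w,\bfi}$ with $1\notin A$ has $a_j=0$ for all $j\in A$.
\item Otherwise, it meets $\CU_{w,\bfi}^{rep}$ only at $k=0$ if some $A\ni 1$ has $a_j=0$ for all $j\in A\setminus\{1\}$.
\item In all remaining cases it misses $\CU_{w,\bfi}^{rep}$.
\end{itemize}
If $s_iw<w$, then $s_iww_0>ww_0$, so no $A\in S_{w,\bfi}$ contains $1$ and only the first and last cases occur. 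The statement for $\bfB(\lambda)$ follows, because its $i$-strings are initial segments of $i$-strings of $\bfB$, and $\bfB_w(\lambda)$ is the image of $\bfB_w$ under $b\mapsto b\eta_\lambda$.
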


\begin{prop}[\cite{Kas94}*{Proposition~4.4}]
\label{CrystalIntersection}
$\bfB_w(\lambda)\cap \bfB^v(\lambda)$ is non-empty if and only if $v\leq w$.
\end{prop}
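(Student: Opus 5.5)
We prove the two directions separately: the forward direction is immediate from the extremal vector $\eta_{v\lambda}$, and the converse goes by induction on $\ell(w)$, using the string property of Proposition~\ref{DemazureString} together with the lifting property of the Bruhat order. Throughout I assume, as in Theorem~\ref{thm:intersection}, that $\lambda$ is regular, so that $u\mapsto u\lambda$ is injective on $W$. For non-regular $\lambda$ the statement should be read modulo the stabilizer of $\lambda$; the same argument applies to minimal coset representatives.

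\emph{If $v\le w$, the intersection is non-empty.} The extremal vector $\eta_{v\lambda}$ is a canonical basis element. It lies in $\bfB^v(\lambda)$, since it generates the opposite Demazure module. It also lies in $\bfB_v(\lambda)$, and $\bfB_v(\lambda)\subset \bfB_w(\lambda)$ because $L_v(\lambda)\subset L_w(\lambda)$ for $v\le w$. Hence $\eta_{v\lambda}\in \bfB_w(\lambda)\cap\bfB^v(\lambda)$.

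\emph{If the intersection is non-empty, then $v\le w$.} We induct on $\ell(w)$.

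For the base case $w=e$, we have $\bfB_e(\lambda)=\{\eta_\lambda\}$. If $\eta_\lambda\in\bfB^v(\lambda)\subset \bfU^-\eta_{v\lambda}$, then comparing weights gives $\lambda\le v\lambda$ in the dominance order. Since also $v\lambda\le\lambda$, we get $v\lambda=\lambda$, and regularity of $\lambda$ forces $v=e$.

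For the inductive step, let $\ell(w)>0$ and choose $i$ with $s_iw<w$. We will use the recursive description
\[
\bfB_w(\lambda)=\bigcup_{k\ge 0}\tilde f_i^k\,\bfB_{s_iw}(\lambda)\setminus\{0\},
\]
which is the analogue in $\bfB(\lambda)$ of property~(b) of $\CU^{rep}$. Take $b\in\bfB_w(\lambda)\cap\bfB^v(\lambda)$, let $S$ be its $i$-string, and let $b_0$ be the highest element of $S$.

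\begin{itemize}
\item We first show $b_0\in\bfB_{s_iw}(\lambda)$. Since $s_iw<w$, Proposition~\ref{DemazureString} gives $S\subset\bfB_w(\lambda)$. By the recursive description, $b=\tilde f_i^k b'$ for some $b'\in\bfB_{s_iw}(\lambda)$, and $b'$ lies on the same string $S$. By Proposition~\ref{DemazureString} applied to $s_iw$, the set $\bfB_{s_iw}(\lambda)\cap S$ is non-empty, hence is either $S$ or $\{b_0\}$. In both cases it contains $b_0$.
\item If $s_iv>v$, Proposition~\ref{DemazureString} says $\bfB^v(\lambda)\cap S$ is empty or all of $S$. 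It contains $b$, so it is $S$, and therefore $b_0\in\bfB^v(\lambda)$. The induction hypothesis applied to $s_iw$ gives $v\le s_iw<w$.
\item If $s_iv<v$, then $\bfB^v(\lambda)\subset\bfB^{s_iv}(\lambda)$ (the containment reverses for opposite Demazure modules), so $b\in\bfB^{s_iv}(\lambda)$. Since $s_i(s_iv)=v>s_iv$, Proposition~\ref{DemazureString} gives $S\subset\bfB^{s_iv}(\lambda)$, hence $b_0\in\bfB^{s_iv}(\lambda)$. The induction hypothesis gives $s_iv\le s_iw$. Together with $s_iv<v$ and $s_iw<w$, the lifting property (Deodhar's $Z$-property) of the Bruhat order yields $v\le w$.
\end{itemize}

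The main point needing care is the first bullet: recovering $b_0\in\bfB_{s_iw}(\lambda)$ from the recursive description of $\bfB_w(\lambda)$, and checking which of the "empty / whole string / single extremal element" alternatives of Proposition~\ref{DemazureString} occurs for each string. Everything else is bookkeeping with that proposition and the lifting property.
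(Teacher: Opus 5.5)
Your proof is correct. It is also a genuinely different route: the paper gives no argument for this statement and simply imports it from Kashiwara.

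You instead derive it from ingredients the paper already uses elsewhere:
\begin{itemize}
\item the string property of Proposition~\ref{DemazureString};
\item Kashiwara's recursive description $\bfB_w(\lambda)=\bigcup_{k\ge 0}\tilde f_i^k\,\bfB_{s_iw}(\lambda)\setminus\{0\}$ for $s_iw<w$, which is the $\bfB(\lambda)$-version of property (b) of $\CU^{rep}$;
\item the inclusion of opposite Demazure modules $\bfU^-\eta_{v\lambda}\subset \bfU^-\eta_{s_iv\lambda}$ for $s_iv<v$, coming from $\eta_{v\lambda}=F_i^{(n)}\eta_{s_iv\lambda}$;
\item Deodhar's $Z$-property, which here amounts to prepending $s_i$ to a reduced subword for $s_iv$ inside a reduced word for $s_iw$.
\end{itemize}

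Your case split $s_iv>v$ versus $s_iv<v$ mirrors the induction the paper runs in the proof of Theorem~\ref{thm:intersection}. So does the trick of replacing $\bfB^v(\lambda)$ by the larger $\bfB^{s_iv}(\lambda)$ so that the whole $i$-string, and hence its top element, becomes available. Your argument therefore makes the section more self-contained at little cost. You still rely on Kashiwara for the string property and the recursive description, so this is a reduction to those inputs rather than a proof from scratch. What the citation buys is brevity.

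Your version also buys something the citation hides. You correctly note that the ``only if'' direction requires $\lambda$ regular: for $\lambda=0$ every intersection equals $\{\eta_0\}$. The paper's statement omits this hypothesis, though it holds wherever the proposition is used in Section~\ref{sec:richardson}. Your base case $w=e$ is exactly where regularity enters. Your remark about minimal coset representatives in the singular case matches the paper's remark after Theorem~\ref{thm:intersection}, since minimal coset representatives are preserved when a left descent is removed.
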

        
\begin{lem}
\label{LowestWtCoeffZero}
Let $x\in L(\lambda)$. For any $i\in I$, let $E_ix=\sum_{b\in \bfB(\lambda)} c_bb$. Then if $b$ is a lowest weight vector in an $i$-string, i.e. $\Tilde{f}_ib=0$, we must have $c_b=0$.
\end{lem}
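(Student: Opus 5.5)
The statement concerns the expansion of $E_ix$ in the canonical basis $\bfB(\lambda)$: the coefficient of any $b$ with $\Tilde{f}_ib=0$ must vanish. My plan is to show that the image of $E_i$ lies in the span of those canonical basis elements that are not lowest in their $i$-strings. The natural tool is the compatibility of the canonical basis of $L(\lambda)$ with the $\mathfrak{sl}_2$-subalgebra attached to $i$. Since we have specialized at $q=1$, the expansion is over $\BR$, but all the structure we use comes from the integral form, so we may argue with the quantum canonical basis and then specialize.

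\textbf{Step 1.} Recall Lusztig's (equivalently Kashiwara's) description of the $i$-string filtration. For each $n\ge 0$, let
\[
L(\lambda)^{\ge n}_i=\operatorname{span}\{b\in\bfB(\lambda)\,:\,\varphi_i(b)\ge n\},
\]
where $\varphi_i(b)=\max\{k:\Tilde{f}_i^kb\neq0\}$. By \cite{Lus93}*{Proposition~25.1.10} (or the analogous statement for the upper/lower global basis in \cite{Kas94}), $L(\lambda)^{\ge n}_i$ coincides with $E_i^{(n)}L(\lambda)$, or equivalently with the sum of the $i$-isotypic pieces of $L(\lambda)$ in which the $\mathfrak{sl}_2$-weight has room to be lowered $n$ more times. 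In particular, for $n=1$ we get
\[
E_iL(\lambda)=\operatorname{span}\{b\in\bfB(\lambda)\,:\,\Tilde{f}_ib\neq0\}.
\]

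\textbf{Step 2.} The lemma follows at once from Step 1. Since $E_ix\in E_iL(\lambda)$, it is a combination only of the $b$ with $\Tilde{f}_ib\neq 0$. Therefore $c_b=0$ whenever $\Tilde{f}_ib=0$.

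\textbf{Main obstacle.} The crux is the identification $E_iL(\lambda)=\operatorname{span}\{b:\Tilde{f}_ib\ne0\}$ in Step 1, which I intend to cite rather than reprove. If a direct argument is wanted, I would check the two inclusions separately.

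For the inclusion $\supseteq$, work in the $\mathfrak{sl}_2$-decomposition. A vector $b$ with $\Tilde{f}_ib\neq0$ has, modulo lower terms of the filtration, a nonzero $F_i$-image. By $\mathfrak{sl}_2$-theory, $b$ then lies in the image of $E_i$ up to terms with larger $\varphi_i$. An induction downward on $\varphi_i$ finishes this inclusion.

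For the inclusion $\subseteq$, a dimension count suffices. Weight space by weight space, both sides have dimension equal to the number of strings passing through that weight without ending there. This follows because $\dim E_iL(\lambda)_\mu=\dim L(\lambda)_{\mu-\alpha_i}-\dim\ker(E_i)_{\mu-\alpha_i}$, and because the crystal strings model the $\mathfrak{sl}_2$-decomposition.

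I expect this filtration-compatibility step to carry the real content. Everything else is formal.
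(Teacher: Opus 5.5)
Your argument is correct once the cited compatibility is granted, but it follows a different route from the paper.

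The paper writes a weight vector as $u=u_0+E_iu_1+E_i^{(2)}u_2+\cdots$ with $u_k\in\ker F_i$. It notes that $\Tilde{e}_i\Tilde{f}_i$ is the projection killing $u_0$, and that $u_0=0$ when $u=E_ix$. It then applies the dichotomy ``either $\Tilde{f}_ib=0$ or $\Tilde{e}_i\Tilde{f}_ib=b$'' to each $b\in\bfB(\lambda)$ in the expansion of $E_ix$. You instead isolate the one statement that carries the content: $E_iL(\lambda)$ is spanned by $\{b\in\bfB(\lambda):\Tilde{f}_ib\neq0\}$.

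This is a genuine gain. For the linear operators defined by that decomposition, the dichotomy says every $b$ lies in $\ker F_i$ or in $E_iL(\lambda)$, and that holds only at the crystal level. For the adjoint representation of $\mathfrak{sl}_3$ at $q=1$, one checks that $F_2F_1\eta_\lambda$ lies in neither $\ker F_1$ nor $E_1L(\lambda)$, whose zero-weight part is spanned by $F_1F_2\eta_\lambda$. So the paper's shorter argument implicitly needs your Step 1. What survives is exactly the one-sided fact you use: $E_iL(\lambda)$, unlike $\ker F_i$, is spanned by part of $\bfB(\lambda)$.

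Two cautions about Step 1.

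First, make sure your reference states the right form. The standard compatibility for the basis $\{b\eta_\lambda\}$ is $F_i^{(n)}L(\lambda)=\operatorname{span}\{b:\varepsilon_i(b)\ge n\}$. The version with $E_i$ and $\varphi_i$ that you need is its lowest-weight mirror. It follows from the fact that $\bfB(\lambda)$ is also obtained by letting the canonical basis of $\bfU^+$ act on the lowest weight vector.

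Second, your fallback direct proof of the inclusion $\supseteq$ does not work as written. A nonzero $F_i$-image does not place a vector in $E_iL(\lambda)$; again $F_2F_1\eta_\lambda$ is an example. Crystal data also only control the basis modulo $q$. A genuine proof has to use the global structure: bar-invariance of $E_iL(\lambda)$ together with compatibility with the crystal lattice, or the lowest-weight description above. Your dimension count for the other inclusion is fine.
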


\begin{proof}
We may assume that $x$ is a weight vector. Recall that any weight vector $u\in L(\lambda)$ of weight $\mu$ can be uniquely written as
\[
u=u_0+E_iu_1+\cdots E_i^{(N)}u_N,
\]
where each $u_k$ is a weight vector of weight $\lambda-k\alpha_i$ with $u_k\in \ker F_i$, and $E_i^{(N)}=E_i^{(N)}/{[N]_q!}$ are the divided powers. Then the Kashiwara operators $\Tilde{e}_i$ and $\Tilde{f}_i$ are given by
\begin{align*}
\Tilde{e}_iu&=E_iu_0+E_i^{(2)}u_1+\cdots E_i^{(N+1)}u_N,\\
\Tilde{f}_iu&=u_1+E_iu_2 +\cdots + E_i^{(N-1)}u_N.
\end{align*}
For $u=E_ix$, note that $u_0=0$, so we have $\Tilde{e}_i\Tilde{f}_iE_ix=E_ix$.

Meanwhile, For any $b\in \bfB(\lambda)$, we have either $\Tilde{f}_ib=0$ or $\Tilde{e}_i\Tilde{f}_ib=b$. Thus applying this to the canonical basis decomposition of $E_ix$, we see that $\Tilde{e}_i\Tilde{f}_iE_ix=E_ix$ if and only if $c_b=0$ for any $b$ with $\Tilde{f}_ib=0$, and the result follows.
\end{proof}

\begin{proof}[Proof of Theorem \ref{thm:intersection}]
By Propositions~\ref{tp=rep} and \ref{OppositeDemazure},
\[
\bfB_{e,w}^{tp}(\lambda)\cap \bfB_{v,w_0}^{tp}(\lambda)=\bfB_w(\lambda)\cap \bfB^v(\lambda).
\]

For $v\leq v'\leq w'\leq w$, we have $\CB_{v',w',>0}\subset \overline{\CB_{v,w,>0}}$, so mapping into $\BP(L(\lambda))$, we must have $\bfB_{v',w'}^{tp}(\lambda)\subset \bfB_{v,w}^{tp}(\lambda)$. Using $e\leq v\leq w\leq w_0$, this implies
\[
\bfB_{v,w}^{tp}(\lambda)\subset\bfB_w(\lambda)\cap \bfB^v(\lambda).
\]
            
For the reverse inclusion, we proceed by induction on $w$ and $v$. The base case $v=e$ follows from $\bfB_{e,w}^{tp}(\lambda)=\bfB_w(\lambda)$ and $\bfB^e(\lambda)=\bfB(\lambda)$.

For the induction step, assume that for all $(v',w')$ such that $v'\leq v$, $w'\leq w$ and $(v',w')\neq (v,w)$, we have $\bfB_{v,w'}^{tp}(\lambda)=\bfB_{w'}(\lambda)\cap \bfB^{v'}(\lambda)$. Fix $i\in I$ such that $s_iw<w$ and let $b\in \bfB_w(\lambda)\cap \bfB^v(\lambda)$ be contained in some $i$-string $S$. Then $S\cap \bfB_w(\lambda)=S$. We have two cases:\\

Case 1: Suppose $S\cap \bfB^v(\lambda)=S$. Let $b'$ be the highest weight vector in $S$. Then $b'\in \bfB_{s_iw}(\lambda)$ and $\bfB_{s_iw}(\lambda)\cap \bfB^v(\lambda)$ is non-empty. By Proposition \ref{CrystalIntersection}, this implies that $v\leq s_iw$.

By the induction hypothesis, $b'\in \bfB_{v,s_iw}^{tp}(\lambda)$. By Proposition \ref{MultiplyBijection}, any element in $\CB_{v,w,>0}$ can be written as $y_i(a)gB^+$ for some $a\in \BR_{>0}$ and $gB^+\in \CB_{v,s_iw,>0}$. $\Psi(gB^+)$ has non-zero $b'$ coordinate, so expanding the exponent $y_i(a)$ and using positivity, we get $b\in \bfB_{v,w}^{tp}(\lambda)$.\\
            
Case 2: Otherwise, $S\cap \bfB^v(\lambda)=\{b\}$, $|S|\geq2$ and $b$ is the lowest weight vector in $S$. By Proposition \ref{DemazureString}, we have $s_iv<v$. Then using Proposition \ref{MultiplyBijection} again, any element in $\CB_{s_iv,w,>0}$ can be written as $x_i(a)gB^+$ for some $a\in \BR_{>0}$ and $gB^+\in \CB_{v,w,>0}$. If $b\not\in \bfB_{v,w}^{tp}(\lambda)$, then the coefficient of $b$ in $\Psi(gB^+)$ is zero. Expanding the exponent $x_i(a)$, Lemma \ref{LowestWtCoeffZero} shows that the coefficient of $b$ in $\Psi(x_i(a)gB^+)$ is still zero, so
\[
b\not\in \bfB_{s_iv,w}^{tp}(\lambda)=\bfB_w(\lambda)\cap \bfB^{s_iv}(\lambda)\supset \bfB_w(\lambda)\cap \bfB^v(\lambda).
\]
This is a contradiction, so we get $b\in \bfB_{v,w}^{tp}(\lambda)$.
\end{proof}

\begin{remark}
If $\lambda$ is singular, consider the minimal length representatives $W^J\subset W$ for the quotient $W/W_J$, where $W_J$ is the parabolic subgroup of $W$ fixing $\lambda$. Then from the proof, we see that Theorem~\ref{thm:intersection} still holds when $v,w\in W^J$.
\end{remark}

\section{Saturation of Bruhat interval polytopes}
\label{sec:saturation}

In this section, we recall the definition of Bruhat interval polytopes $P_{v,w}(\lambda)$. Then we show that for type $A$, the set of weights of $\bfB_{v,w}(\lambda)$ is saturated in $P_{v,w}(\lambda)$. This gives a useful application of the intersection theorem in Section~\ref{sec:richardson}.

We continue to assume that $\lambda\in X^+$ is regular.

\subsection{Bruhat interval polytopes}
\label{sec:saturation:definition}

\begin{defi}
Let $v\leq w$ in $W$. The Bruhat interval polytope $P_{v,w}(\lambda)$ is defined as the convex hull of $u\lambda$ for $v\leq u\leq w$ in $X_\BR$.
\end{defi}

The Bruhat interval polytope has an alternative definition using the moment map. The action of the torus on $\CB$ induces a moment map $\mu:\CB\to X_\BR$. Let $gB^+\in \CB$ and $Y$ be the closure of its torus orbit in $\CB$. We consider the image $\mu(Y)\subset X_\BR$. The following theorem is by Atiyah \cite{Ati82} and Guillemin--Sternberg \cite{GS82}.

\begin{thm}
$\mu(Y)$ is a convex polytope, and $\mu$ induces a dimension preserving bijection between torus orbits in $Y$ and the faces of $\mu(Y)$.
\end{thm}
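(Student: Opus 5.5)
The plan is to make the moment map explicit through the embedding $\Psi:\CB\to\BP(L(\lambda))$ of Section~\ref{sec:bases:TP} and then reduce everything to the weights in the support of a single point. Decompose $L(\lambda)$ into weight spaces and fix a $T$-invariant Hermitian inner product for which the weight spaces are orthogonal. The moment map of the $T$-action on $\BP(L(\lambda))$ is then
\[
\mu([u])=\frac{\sum_{\nu}\|u_\nu\|^2\,\nu}{\sum_\nu \|u_\nu\|^2},
\]
where $u=\sum_\nu u_\nu$ is the weight decomposition. Composing with $\Psi$ gives $\mu$ on $\CB$, and since $\Psi$ is a $T$-equivariant closed embedding, $Y$ is identified with the closure of the torus orbit of $[u]=\Psi(gB^+)$. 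Let $\Sigma=\{\nu : u_\nu\ne0\}$ and $P=\mathrm{conv}(\Sigma)$. The goal is to show $\mu(Y)=P$ and to match orbits with faces of $P$.

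\emph{Step 1: the open orbit.} For $\xi\in Y_\BR$, the point $\exp(\xi)\cdot[u]$ has moment image
\[
\frac{\sum_{\nu\in\Sigma}e^{2\langle\xi,\nu\rangle}\|u_\nu\|^2\,\nu}{\sum_{\nu\in\Sigma}e^{2\langle\xi,\nu\rangle}\|u_\nu\|^2},
\]
and the compact part of $T$ acts trivially on $\mu$. This expression is $\tfrac12\nabla\phi(\xi)$, where $\phi(\xi)=\log\sum_{\nu}e^{2\langle\xi,\nu\rangle}\|u_\nu\|^2$ is a log-sum-exp function. The function $\phi$ is convex, and it is strictly convex along the directions spanned by differences of elements of $\Sigma$. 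By the standard Legendre-transform argument, $\nabla\phi$ is then a diffeomorphism from $Y_\BR$ modulo the annihilator of $\Sigma-\Sigma$ onto the relative interior of $P$. To prove surjectivity, for each $p$ in the relative interior I would check that $\phi(\xi)-2\langle\xi,p\rangle$ is proper on that quotient, so that it attains a minimum, and the minimum is a preimage of $p$.

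\emph{Step 2: boundary orbits.} For a face $F$ of $P$, cut out by some $\zeta\in Y_\BR$, set $u_F=\sum_{\nu\in\Sigma\cap F}u_\nu$. Taking the limit of $\exp(t\zeta)\cdot[u]$ as $t\to\infty$ gives $[u_F]$, so $[u_F]\in Y$, and more generally the whole orbit $T\cdot[u_F]$ lies in $Y$. Applying Step 1 to $u_F$ shows that $\mu(T\cdot[u_F])$ is the relative interior of $F$, and the dimension of this orbit equals the dimension of the affine span of $\Sigma\cap F$, which is $\dim F$. Conversely, I would show that every point of $Y$ lies in some $T\cdot[u_F]$. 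The route is to identify $Y$ with the projective toric variety attached to $\Sigma$, whose torus orbits are indexed by the faces of $P$; the alternative is a limit argument that any convergent sequence $t_n\cdot[u]$ has a subsequence converging along a one-parameter direction into some $T\cdot[u_F]$. Distinct faces give distinct supports $\Sigma\cap F$, so the orbits $T\cdot[u_F]$ are distinct. Taking the union over faces then gives $\mu(Y)=\bigsqcup_F \mathrm{relint}(F)=P$, which is convex, together with the dimension-preserving bijection between orbits and faces.

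I expect the main obstacle to be the converse in Step 2, namely that $Y$ contains no orbits other than the $T\cdot[u_F]$. This is exactly the description of the orbits of a (possibly non-normal) projective toric variety. I would first try to handle it with a valuative, or one-parameter-subgroup, limit argument. The surjectivity onto the relative interior in Step 1 is more routine once properness of the shifted log-sum-exp is checked.
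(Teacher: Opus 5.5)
The paper gives no proof of this statement; it quotes it from Atiyah and Guillemin--Sternberg, where it holds for the closure of a complex torus orbit in any compact K\"ahler manifold with a Hamiltonian torus action. Your argument is correct. It is essentially Atiyah's proof made explicit through the embedding $\Psi$.

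\emph{Step 1.} Your mechanism is Atiyah's: on the open orbit, modulo the compact torus, the moment map is the gradient of a strictly convex potential, hence a diffeomorphism onto the relative interior. The difference is that in $\BP(L(\lambda))$ the potential is the explicit log-sum-exp function, so surjectivity becomes an elementary properness check.

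\emph{Step 2.} For the boundary you replace Atiyah's general gradient-flow analysis by the orbit structure of the projective toric variety attached to $\Sigma$. After rescaling, $Y$ sits in $\BP(\bigoplus_{\nu\in\Sigma}\BC u_\nu)$ with $T$ acting diagonally. The step you flag as the main obstacle closes by the standard binomial-equation argument.
\begin{itemize}
\item Let $y\in Y$ have support $S$, and let $F$ be the smallest face of $P$ containing $\mathrm{conv}(S)$. A relative interior point of $\mathrm{conv}(S)$ lies in the relative interior of $F$. It therefore gives an integral relation $\sum_{\nu\in S}a_\nu\nu=\sum_{\nu\in\Sigma\cap F}b_\nu\nu$ with all $a_\nu,b_\nu>0$ and $\sum a_\nu=\sum b_\nu$.
\item The corresponding binomial vanishes on $Y$. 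Since its left side is nonzero at $y$, this forces $\Sigma\cap F\subseteq S$, hence $S=\Sigma\cap F$.
\item The binomials supported on $S$ cut out exactly the closed subgroup of $(\BC^\times)^S$ that is the image of $T\times\BC^\times$. Hence $y\in T\cdot[u_F]$.
\end{itemize}

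\emph{Comparison.} Beyond being self-contained, your route gives the explicit formula $\mu(Y)=\mathrm{conv}\{\nu: u_\nu\neq 0\}$. For $gB^+\in\CB_{v,w,>0}$ this support is exactly $wt(\bfB^{tp}_{v,w}(\lambda))$, because the canonical basis is a weight basis. This immediately gives the ``only if'' half of the polytope condition in the paper's saturation theorem. The zero-dimensional orbits in $Y$ are $T$-fixed points of $\Psi(\CB)$, namely lines $[\eta_{u\lambda}]$. So you also recover directly the vertex description that the paper imports in the sentence following the theorem. The price is that you need a $T$-equivariant closed projective embedding, with the moment map taken to be the pullback of the Fubini--Study one. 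For $\CB$ this uses that $\lambda$ is regular, which that section assumes. The cited theorem, by contrast, needs no embedding at all.
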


In particular, the vertices of $\mu(Y)$ correspond to the $T$-fixed points in $Y$. By \cite{GS87}*{Proposition 5.1}, for any $gB^+\in \CB_{v,w,>0}$, the vertices of the polytope $\mu(Y)$ are the extremal weights $u\lambda$ such that $\eta_{u\lambda}\in \bfB^{tp}_{v,w}(\lambda)$, so $\mu(Y)$ is exactly the Bruhat interval polytope $P_{v,w}(\lambda)$.

Via Coxeter matroids, Bruhat interval polytopes are pseudo-Weyl polytopes and can be described as an intersection of half-spaces. Any facet of $P_{v,w}(\lambda)$ lies on a hyperplane of the form
\[
\langle \mu,u\omega_i^\vee\rangle= \langle \lambda_u,u\omega_i^\vee\rangle
\]
for some $u\in W$ and $i\in I$. See \citelist{\cite{GS87}\cite{Kam05}} for more detailed discussions on these polytopes.

\subsection{Saturation in type $A$}
\label{sec:saturation:typeA}

The main goal of Section \ref{sec:saturation} is to prove the following result.

\begin{thm}
\label{TypeASaturation}
Suppose $G$ is of type $A$ and let $\lambda\in X^+$. Then $\mu\in wt(\bfB_{v,w}(\lambda))$ if and only if $\lambda-\mu$ is in the root lattice and $\mu\in P_{v,w}(\lambda)$.
\end{thm}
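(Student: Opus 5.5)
The forward direction is immediate. Every $b\in\bfB_{v,w}(\lambda)$ is a weight vector of $L(\lambda)$, so $\lambda-\mathrm{wt}(b)$ lies in the root lattice. For convexity, take $x\in\CB_{v,w,>0}$ and write $\Psi(x)=[\sum_b c_b(x)b]$. The weights $\mu$ for which some $c_b(x)$ with $\mathrm{wt}(b)=\mu$ is nonzero are exactly the torus weights occurring in the Plücker-type image of $x$. By the moment map description (Atiyah, Guillemin--Sternberg, together with \cite{GS87}*{Proposition 5.1}), they lie in $\mu(Y)=P_{v,w}(\lambda)$. Equivalently, $\mathrm{wt}(\bfB_{v,w}(\lambda))$ lies in the convex hull of the extremal weights $u\lambda$ with $\eta_{u\lambda}\in\bfB_{v,w}(\lambda)$, and these are exactly the $u$ with $v\le u\le w$.

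For the converse I would use Theorem~\ref{thm:intersection}, which gives $\bfB_{v,w}(\lambda)=\bfB_w(\lambda)\cap\bfB^v(\lambda)$. It therefore suffices to show the following: if $\mu\in P_{v,w}(\lambda)$ is in $\lambda+$root lattice, then some canonical basis element of weight $\mu$ lies in both the Demazure crystal $\bfB_w(\lambda)$ and the opposite Demazure crystal $\bfB^v(\lambda)$.

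In type $A$ I would realize $\bfB(\lambda)$ as semistandard Young tableaux. Then $\bfB_w(\lambda)$ is the set of tableaux whose right key is $\le w$, and $\bfB^v(\lambda)$ is the set of tableaux whose left key is $\ge v$ (Lascoux--Schützenberger). In this model the task becomes: for every lattice point $\mu$ of the Bruhat interval polytope, produce a tableau of content $\mu$ whose left key is at least $v$ and whose right key is at most $w$.

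I would carry this out column by column. Write $\lambda$ as a sum of fundamental weights $\omega_{k}$, one for each column. The aim is to decompose $\mu$ as $\sum_c \mu_c$, where each $\mu_c$ is a $0/1$ vector that is a vertex-compatible weight in the Bruhat interval polytope of the corresponding minuscule piece. Because each column is minuscule, every lattice point of a minuscule polytope is a vertex, so each $\mu_c=u_c\omega_{k_c}$ with $u_c$ lying in the appropriate parabolic Bruhat interval. The columns must also be chosen so that $u_1\ge\cdots$ in the compatible order, so that they glue into a tableau whose keys are controlled.

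The main obstacle is proving that such an integral decomposition exists. This amounts to an integer decomposition property for Bruhat interval polytopes in type $A$, which are generalized permutohedra/polymatroid-like. I would first try induction on $\ell(w)-\ell(v)$. Using the half-space description from \cite{Kam05}, with facets $\langle\mu,u\omega_i^\vee\rangle=\langle\lambda,u\omega_i^\vee\rangle$, a lattice point $\mu$ either lies in a smaller interval polytope $P_{v,s_iw}$ or $P_{s_iv,w}$, or can be moved along an $\alpha_i$-string. Case analysis as in the proof of Theorem~\ref{thm:intersection} would then combine with Proposition~\ref{DemazureString}: an $i$-string contained in $\bfB_w\cap\bfB^v$ realizes all weights between its endpoints, which handles the interior of segments parallel to roots. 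The $E$-type failure noted in \cite{BJK} suggests this integrality step is genuinely where type $A$ must be used.
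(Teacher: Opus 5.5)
Your forward inclusion is fine: the moment-map argument correctly gives $\mathrm{wt}(\bfB_{v,w}(\lambda))\subseteq P_{v,w}(\lambda)$. The gap is the converse, which is the real content of the theorem. You correctly reduce it, via Theorem~\ref{thm:intersection}, to finding an element of $\bfB_w(\lambda)\cap\bfB^v(\lambda)$ of weight $\mu$, and then you call the needed integral decomposition ``the main obstacle'' without supplying it.

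The tableau plan also leaves the gluing condition unspecified, and the data you do specify cannot certify membership. Take type $A_2$ with $\lambda=\rho$, $v=e$ and $w=s_2s_1$. Both weight-zero tableaux, with columns $(\{1,2\},\{3\})$ and $(\{1,3\},\{2\})$, have every column of the form $u_c\omega_{k_c}$ with $u_c\le w$. Yet $L_w(\rho)=\bfU^+\eta_{-\alpha_2}$ has a one-dimensional zero weight space, so only one of them lies in $\bfB_w(\rho)$. If you instead force the columns to be nested, you only produce key tableaux, whose weights are vertices $u\lambda$.

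Your fallback induction is where the paper's proof actually lives, but the ingredients that make it work are missing. Fix $i$ with $s_iw<w$, and order the line $\mu+\BQ\alpha_i$ by the $\alpha_i$-coordinate.

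If $s_iv<v$ and $v\not\le s_iw$, one transports from $(s_iv,s_iw)$ by $\dot s_i$. Otherwise $v\le s_iw$, and two facts are needed.
\begin{enumerate}
\item The line meets $P_{v,s_iw}(\lambda)$ at a point $\ge\mu$. The paper gets this from $\CB_{v,w,>0}=y_i(\BR_{>0})\CB_{v,s_iw,>0}$ (Proposition~\ref{MultiplyBijection}), after rescaling $\lambda\mapsto N\lambda$ (Lemmas~\ref{PolytopeMultipleIsWeight} and~\ref{LineIntersectSmaller}).
\item The top endpoint $\mu'=\mu+k\alpha_i$ of this segment has $k\in\BZ$. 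It lies on a facet $\langle\,\cdot\,,u\omega_j^\vee\rangle=\mathrm{const}$ with $\langle\alpha_i,u\omega_j^\vee\rangle\ne0$. Roots in type $A$ have simple-root coefficients in $\{0,\pm1\}$, so this pairing is $\pm1$. This is precisely where type $A$ enters; your proposal only alludes to it.
\end{enumerate}

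By induction, some $b'\in\bfB_{v,s_iw}(\lambda)$ then has weight $\mu'$, and one must still reach $\mu$ inside $\bfB_{v,w}(\lambda)$.
\begin{itemize}
\item If $s_iv>v$, then $P_{v,w}(\lambda)$ is $s_i$-stable. Applying the first fact to both $\mu$ and $s_i\mu$ forces $s_i\mu'\le\mu\le\mu'$. Proposition~\ref{DemazureString} then puts the whole $i$-string of $b'$ into $\bfB_w(\lambda)\cap\bfB^v(\lambda)$.
\item If $s_iv<v$, one needs the hypothesis for the \emph{larger} interval $[s_iv,w]$ together with the $x_i(a)$-factorization. This produces a weight of $\bfB_{v,w}(\lambda)$ below $\mu$ on the line. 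One then uses that these weights form an unbroken string.
\end{itemize}
That last step already rules out induction on $\ell(w)-\ell(v)$; the paper instead inducts on pairs $(v,w)$ in the product Bruhat order.
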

        
Many of the tools come from \cite{BJK} with suitable modification.

\begin{lem}
\label{PolytopeMultipleIsWeight}
$\mu\in P_{v,w}(\lambda)$ if and only if there is an integer $N\geq 1$ such that $N\mu\in wt(\bfB_{v,w}^{tp}(N\lambda))$.
\end{lem}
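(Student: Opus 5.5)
Both implications go through the moment map picture of Section~\ref{sec:saturation:definition}, and the main tool is a scaling argument in the projective embedding $\Psi$.

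\emph{The ``if'' direction.} Suppose $N\mu\in wt(\bfB_{v,w}^{tp}(N\lambda))$, so some $b\in\bfB_{v,w}^{tp}(N\lambda)$ has weight $N\mu$. By Theorem~\ref{thm:intersection}, $b\in \bfB_w(N\lambda)\cap\bfB^v(N\lambda)$. I will show that every weight of $\bfB_w(N\lambda)\cap\bfB^v(N\lambda)$ lies in $P_{v,w}(N\lambda)=NP_{v,w}(\lambda)$. For this I use the half-space description. Take a facet inequality $\langle \nu,u\omega_i^\vee\rangle\ge c$ (or $\le c$). Pick $x\in\CB_{v,w,>0}$ and write $\Psi(x)=[\sum c_b b]$. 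The torus orbit closure of $x$ has moment polytope $P_{v,w}$. Concretely, $\mu(tx)$ is a convex combination of the weights $\mathrm{wt}(b)$ with weights $|c_b|^2|t^{\mathrm{wt}(b)}|^2$. Hence $\mu(Y)$ is the convex hull of the weights of the $b$ with $c_b(x)\neq0$, that is, of $wt(\bfB_{v,w}^{tp}(N\lambda))$. This is the standard fact that the moment image of a torus orbit closure is the convex hull of the weights in its support. So $N\mu\in P_{v,w}(N\lambda)$, and therefore $\mu\in P_{v,w}(\lambda)$.

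\emph{The ``only if'' direction.} By the same fact, $P_{v,w}(\lambda)$ is the convex hull of the finite set $wt(\bfB_{v,w}^{tp}(\lambda))$, whose elements lie in $X$. So $P_{v,w}(\lambda)$ is a rational polytope. Now let $\mu\in P_{v,w}(\lambda)$. The plan has two steps.

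First, I reduce to rational points. If ``$\mu\in X_\BR$'' in the statement means an arbitrary real point, then $N\mu$ cannot be a lattice weight unless $\mu$ is rational, so the statement must be read for $\mu\in X_\BQ$. I take $N$ divisible enough that $N\mu$ is a nonnegative integer combination $\sum_u n_u\,u\lambda$ of vertices, with $\sum_u n_u=N$ and each $u\in[v,w]$.

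Second, I realize $N\mu$ as a weight using the Segre/multiplicativity of $\Psi$. The map $L(N\lambda)\hookrightarrow L(\lambda)^{\otimes N}$ sends $\eta_{N\lambda}$ to $\eta_\lambda^{\otimes N}$. Therefore $x\eta_{N\lambda}$ corresponds to $(x\eta_\lambda)^{\otimes N}$. The coefficient of weight $N\mu$ in $(x\eta_\lambda)^{\otimes N}$ includes the term $\prod_u c_{\eta_{u\lambda}}(x)^{n_u}$ together with other products. To avoid cancellation I use positivity: for $x\in\CB_{v,w,>0}$, up to a common scalar all coefficients $c_b(x)$ are nonnegative, by the subtraction-free expressions in the proof of Lemma~\ref{lem:tp_indep}. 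Hence the weight-$N\mu$ component of $x\eta_{N\lambda}$ is a sum of nonnegative products, and at least one of them is strictly positive, so the component is nonzero. Since $c_b(x)$ is nonzero for a canonical basis element of $L(N\lambda)$ exactly when $b\in\bfB^{tp}_{v,w}(N\lambda)$, this gives $N\mu\in wt(\bfB_{v,w}^{tp}(N\lambda))$.

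\emph{Main obstacle.} The weight-$N\mu$ component is nonzero in $L(\lambda)^{\otimes N}$, and it lies in the image of $L(N\lambda)$. I need to deduce that some canonical basis coefficient of $L(N\lambda)$ at that weight is nonzero. That deduction is immediate: a nonzero vector has some nonzero coordinate in a basis. No positivity of the canonical basis inside the tensor product is needed, because nonvanishing of the vector suffices. The genuinely delicate point is the positivity of all $c_b(x)$ simultaneously. For that I rely on Lusztig's result that $\CB_{\ge0}$ maps into the cone spanned by $\bfB(\lambda)$, which is the basis of Lemma~\ref{lem:tp_indep}.
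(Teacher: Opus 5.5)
Your argument is correct, and it takes a genuinely different route from the paper's.

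\textbf{Comparison with the paper.} The paper reduces to the Demazure case in both directions.
\begin{itemize}
\item For ``if'', it uses Theorem~\ref{thm:intersection} to place $N\mu$ in $wt(\bfB_w(N\lambda))\cap wt(\bfB^v(N\lambda))$. It then applies \cite{BJK} to the Demazure and opposite Demazure modules separately, and concludes by identifying $P_{e,w}(\lambda)\cap P_{v,w_0}(\lambda)$ with $P_{v,w}(\lambda)$.
\item For ``only if'', it projects $\eta_{u_1\lambda}\otimes\cdots\otimes\eta_{u_N\lambda}$ to $L(N\lambda)$ and checks that the image lies in $L_w(N\lambda)\cap L^v(N\lambda)$.
\end{itemize}
You instead work with a single point $x\in\CB_{v,w,>0}$.
\begin{itemize}
\item Atiyah convexity, together with the identification of the orbit-closure polytope with $P_{v,w}$ in Section~\ref{sec:saturation:definition}, gives $\mathrm{conv}\,wt(\bfB^{tp}_{v,w}(N\lambda))=P_{v,w}(N\lambda)$. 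This settles ``if'' at once.
\item The equivariant embedding $L(N\lambda)\hookrightarrow L(\lambda)^{\otimes N}$, $g\eta_{N\lambda}\mapsto(g\eta_\lambda)^{\otimes N}$, settles ``only if''.
\end{itemize}
The paper's route makes the lemma a formal consequence of the Demazure case. Yours avoids \cite{BJK} and never separates $\bfB_w$ from $\bfB^v$. That is a real advantage in the ``if'' direction, because the polytope identity the paper relies on is false in general. Take type $A_2$, $\lambda=\rho$, $v=s_1$, $w=s_1s_2$. The weight
\[
0=\tfrac{1}{2}(s_2\rho+s_1s_2\rho)=\tfrac{1}{2}(s_1\rho+s_2s_1\rho)
\]
lies in $P_{e,w}(\rho)\cap P_{v,w_0}(\rho)$, but not in the segment $P_{v,w}(\rho)=[s_1\rho,s_1s_2\rho]$. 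In fact $0$ is a weight of both $\bfB_w(\rho)$ and $\bfB^v(\rho)$, but via two different basis elements (proportional to $F_1F_2\eta_\rho$ and $F_2F_1\eta_\rho$). Your moment-map argument sees only $\bfB^{tp}_{v,w}$ and so is not affected by this.

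\textbf{Points to tidy.}
\begin{itemize}
\item Positivity of the $c_b(x)$ is not needed in the ``only if'' direction, so it is not the delicate point you describe. The pure tensors $b_1\otimes\cdots\otimes b_N$ with $b_k\in\bfB(\lambda)$ form a basis of $L(\lambda)^{\otimes N}$. Hence the coefficient of $\eta_{u_1\lambda}\otimes\cdots\otimes\eta_{u_N\lambda}$ in $(g\eta_\lambda)^{\otimes N}$ is the single product $\prod_k c_{\eta_{u_k\lambda}}(x)$, and no cancellation can occur.
\item You should state why $c_{\eta_{u\lambda}}(x)\neq 0$ for $v\le u\le w$. For regular $\lambda$, each such $u\lambda$ is a vertex of $P_{v,w}(\lambda)=\mathrm{conv}\,wt(\bfB^{tp}_{v,w}(\lambda))$. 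A vertex of the convex hull of a finite set lies in that set, and the $u\lambda$-weight space is spanned by $\eta_{u\lambda}$. Alternatively, cite Theorem~\ref{thm:intersection}.
\item The formula with weights $|c_b|^2|t^{wt(b)}|^2$ assumes the canonical basis is orthonormal, which it is not. Only the orthogonality of distinct weight spaces is needed: the moment map sends $[\xi]$ to $\sum_\nu\|\xi_\nu\|^2\nu/\|\xi\|^2$. Since $\xi_\nu\neq0$ exactly when some $c_b\neq0$ with $wt(b)=\nu$, your description of the orbit-closure polytope is unchanged.
\end{itemize}
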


\begin{proof}
By \cite{BJK}*{Lemma 5.6}, we have $\mu\in P_{e,w}(\lambda)$ if and only if there is an integer $N\geq 1$ such that $N\mu\in wt(\bfB_w(N\lambda))$. Similarly, using the opposite Demazure, $\mu\in P_{v,w_0}(\lambda)$ if and only if there is an integer $N\geq 1$ such that $N\mu\in wt(\bfB^v(N\lambda))$.

If $N\mu\in wt(\bfB_{v,w}^{tp}(N\lambda))$, by Theorem~\ref{thm:intersection}, we have
\[
N\mu\in wt(\bfB_w(N\lambda))\cap wt(\bfB^v(N\lambda)),
\]
so$N\mu\in P_{e,w}\cap P_{v,w_0}=P_{e,w}$.

For the other direction, $\mu$ is a rational convex combination $u\lambda$ for $v\leq u\leq w$. Therefore, for some $N\geq 1$, $N\mu$ is a sum of $u_i\lambda$ for $v\leq u_i\leq w$ for $i=1,\cdots N$. By the proof of \cite{BJK}*{Lemma 5.6}, the tensor product $u_1\eta_\lambda\ox \cdots \ox u_N\eta_\lambda$ in $L(\lambda)^{\ox N}$ has weight $N\mu$, and has non-zero projection to $L_w(N\lambda)\subset L(N\lambda)$. By the same argument for the opposite Demazure, the projection also belongs to $L^v(N\lambda)$. We conclude that there is a vector $\bfB_{v,w}^{tp}(N\lambda)=\bfB_w(N\lambda)\cap \bfB^v(N\lambda)$ with weight $N\mu$.
\end{proof}

Using this lemma, we get the following result which is crucial to the induction step.

\begin{lem}
\label{LineIntersectSmaller}
Let $\mu\in P_{v,w}(\lambda)$ with $v\leq s_iw<w$. Then the line $\mu+\BQ \alpha_i$ has non-empty intersection with $P_{v,s_iw}(\lambda)$. Furthermore, if $\lambda-\mu$ is in the root lattice, then we can find $\mu'\in P_{v,s_iw}(\lambda)$ such that $\lambda-\mu'$ is in the root lattice.
\end{lem}

\begin{proof}
Let $\mu\in P_{v,w}(\lambda)$. By Lemma \ref{PolytopeMultipleIsWeight}, there is a positive integer $N$ large enough such that $N\mu\in wt(\bfB_{v,w}^{tp}(N\lambda))$, so we can find $b\in \bfB_{v,w}^{tp}(N\lambda)$ with $wt(b)=N\mu$. 
            
By assumption, any element of $\CB_{v,w,>0}$ is of the form $y_i(a)gB^+$ for $gB^+\in \CB_{v,s_iw,>0}$ and $a>0$. Therefore, by the $\Psi(gB^+)$ definition of $\bfB_{v,w}^{tp}(N\lambda)$, we can find $b'\in \bfB_{v,s_iw}^{tp}(N\lambda)$ and a non-negative integer $n$ such that $b$ has non-zero coefficient in $f_i^nb'$. Then $wt(b')=wt(b)+n\alpha_i=N\mu+n\alpha_i\in P_{v,s_iw}(N\lambda)$, so we get $\mu+n\alpha_i/N\in P_{v,s_iw}(\lambda)$.
            
Now assume $\lambda-\mu$ is in the root lattice. The intersection $(\mu+\BQ \alpha_i)\cap P_{v,s_iw}(\lambda)$ is an interval $[\mu+k\alpha_i,\mu+l\alpha_i]$ with $k\geq l$. Then $\mu+k\alpha_i$ is on a facet of $P_{v,s_iw}(\lambda)$, so for some $u\in W$ and $j\in I$, we have
\[
\langle \mu+k\alpha_i, u\omega_j^\vee\rangle=\langle \lambda_u, u\omega_j^\vee\rangle.
\]
Furthermore, by maximality of $k$, we can find a facet such that $\langle \alpha_i,u\omega_j^\vee\rangle\neq0$. Then we get
\[
k\langle \alpha_i, u\omega_j^\vee\rangle=\langle \lambda_u, u\omega_j^\vee\rangle-\langle \mu, u\omega_j^\vee\rangle.
\]
Note that
\[
\lambda_u-\mu=(\lambda_u-\lambda)+(\lambda-\mu)
\]
is in the root lattice, so the right hand side is an integer. Meanwhile, in type $A$, any root can be written as a sum of simple roots with coefficients in $\{-1,0,1\}$, so $\langle \alpha_i, u\omega_j^\vee\rangle=\langle u^{-1}\alpha_i,\omega_j^\vee\rangle$ is $\pm1$. This implies $k$ is an integer. Thus we get $\mu'=\mu+k\alpha_i\in P_{v,s_iw}(\lambda)$ such that $\lambda-\mu'$ is in the root lattice.
\end{proof}

We are now ready to prove Theorem~\ref{TypeASaturation}.

\begin{proof}[Proof of Theorem~\ref{TypeASaturation}]
We proceed by induction similar to the proof of Theorem \ref{thm:intersection}. The base case $v=w=e$ is trivial. Assume that the theorem is true for all $(v',w')$ with $v' \leq v$, $w'\leq w$ and $(v',w')\neq (v,w)$. Fix $i\in I$ such that $s_iw<w$. We have three cases:\\

Case 1: If $s_iv<v$ and $v\not\leq s_iw$, then $P_{v,w}(\lambda)=s_iP_{s_iv,s_iw}(\lambda)$ and any element of $\CB_{v,w,>0}$ is of the form $\dot{s}_igB^+$ for some $gB^+\in \CB_{s_iv,s_iw,>0}$. Since $\dot{s}_i$ acts on $L(\lambda)$ by sending weight vectors with weight $\mu$ to weight vectors with weight $s_i\mu$, the theorem follows from induction hypothesis.\\

Case 2: If $s_iv>v$, then $P_{v,w}(\lambda)$ is stable under $s_i$. By Lemma \ref{LineIntersectSmaller}, we can find $\mu'\in P_{v,s_iw}(\lambda)$ with $\lambda-\mu'$ in the root lattice, and $\mu'$ is maximal. Then $\mu'\geq \mu$ and since $s_i\mu\in P_{v,w}(\lambda)$, we also have $\mu'\geq s_i\mu$. This forces $\mu'\geq \mu\geq s_i\mu'$.

By induction hypothesis, we can find $b'\in \bfB_{v,s_iw}^{tp}(\lambda)$ with $wt(b')=\mu'$. Then the $i$-string containing $b'$ lies in $\bfB_{v,w}^{tp}(\lambda)$, and we can find $b$ in the $i$-string between $b'$ and $\Tilde{s}_ib'$ with $wt(b)=\mu$.
            
Conversely, for any $\mu=wt(b)\in wt(\bfB_{v,w}^{tp}(\lambda))$, the highest weight in the $i$-string of $b$ lies in $P_{v,s_iw}(\lambda)$ by assumption, and the lowest weight lies in $s_i(P_{v,s_iw}(\lambda))\subset P_{v,w}(\lambda)$. By convexity $\mu\in P_{v,w}(\lambda)$.\\
            
Case 3: If $s_iv<v$ and $v\leq s_iw$, then $\mu\in P_{s_iv,w}(\lambda)$. By the induction hypothesis, we can find $b\in \bfB_{s_iv,w}^{tp}(\lambda)$ such that $wt(b)=\mu$. Since any element in $\CB_{s_iv,w,>0}$ can be written as $x_i(a)gB^+$ for $a>0$ and $gB^+\in \CB_{v,w,>0}$, so by $b\in \bfB_{s_iv,w}^{tp}(\lambda)$, there is $b'\in \bfB_{v,w}^{tp}(\lambda)$ and a non-negative integer $n$ such that $b$ has non-zero coefficient in $e_i^nb$.
            
By the induction hypothesis, $(\mu+\BQ\alpha_i)\cap wt(\bfB_{v,s_iw}(\lambda))$ is a consecutive set of weights. Meanwhile, $\bfB_{v,w}^{tp}(\lambda)$ is the union of the canonical basis vectors with non-zero coefficients in $f_i^nb$ as $n$ runs over the non-negative integers and $b$ runs over $\bfB_{v,s_iw}^{tp}(\lambda)$. Therefore, $(\mu+\BQ\alpha_i)\cap wt(\bfB_{v,w}^{tp}(\lambda))$ is a consecutive set of weights. Since $\mu$ is in between two weights: the maximal $\mu'$ from Lemma \ref{LineIntersectSmaller} and $wt(b')$, this shows that $\mu\in wt(\bfB_{v,w}^{tp}(\lambda))$.

Conversely, let $\mu=wt(b)\in wt(\bfB_{v,w}^{tp}(\lambda))$. The maximum $\mu'$ in the intersection $(\mu+\BQ\alpha_i)\cap P_{v,w}(\lambda)$ belongs to $P_{v,s_iw}(\lambda)$ must satisfy $\mu'\geq \mu$ by the generation of $\bfB_{v,w}^{tp}(\lambda)$ from $\bfB_{v,s_iw}^{tp}(\lambda)$.

By the induction hypothesis for $(s_iv,w)$, $\mu\in P_{s_iv,w}(\lambda)$. For any $s_iv\leq u\leq w$, if $s_iu>u$, then $v\leq s_iu\leq w$, while if $s_iu<u$, then $v\leq u\leq w$. In any case, $u\lambda=u'\lambda+k\alpha_i$ with $k\geq0$ and $v\leq u'\leq w$. Writing $\mu$ as a convex combination of the vertices in $P_{s_iv,w}(\lambda)$, we see that $\mu=\mu''+k'\alpha_i$ for some $\mu''\in P_{v,w}(\lambda)$, and $k'\in \BQ_{\geq0}$. We get $\mu''\leq \mu\leq \mu'$ with $\mu',\mu''\in P_{v,w}(\lambda)$, so $\mu\in P_{v,w}(\lambda)$.
\end{proof}

\end{document}